\DeclareFontFamily{U}{matha}{\hyphenchar\font45}
\DeclareFontShape{U}{matha}{m}{n}{
      <5> <6> <7> <8> <9> <10> gen * matha
      <10.95> matha10 <12> <14.4> <17.28> <20.74> <24.88> matha12
      }{}
\DeclareSymbolFont{matha}{U}{matha}{m}{n}
\DeclareMathSymbol{\wedge}         {2}{matha}{"5E}
\DeclareMathSymbol{\vee}           {2}{matha}{"5F}
\newtheorem{thm}{Theorem}
\newtheorem{prop}{Proposition}[section]
\newtheorem{cor}[thm]{Corollary}
\newtheorem{defn}[thm]{Definition}
\newtheorem{lem}[thm]{Lemma}
\newtheorem{rem}[prop]{Remark}
\newtheorem{exm}[thm]{Example}
\newcommand{\Z}{\mathbb{Z}}
\def\blfootnote{\xdef\@thefnmark{}\@footnotetext}
\begin{document}
\title{Degenerations and order of graphs realized by finite abelian groups}
\author{{Rameez Raja
\footnote{Department of Mathematics, National Institute of Technology Srinagar-190006, Jammu and Kashmir, India.  Email: rameeznaqash@nitsri.ac.in}}}

\date{}

\maketitle
\vskip 5mm
\noindent{\footnotesize \bf Abstract.}
Let $G_1$ and $G_2$ be two groups. If a group homomorphism $\varphi:G_1 \rightarrow G_2$ maps $a\in G_1$ into $b\in G_2$ such that $\varphi(a) = b$, then we say $a$ \textit{degenerates} to $b$ and if every element of $G_1$ degenerates to elements in $G_2$, then we say $G_1$ degenerates to $G_2$. We discuss degeneration in graphs and show that degeneration in groups is a particular case of degeneration in graphs. We exhibit some interesting properties of degeneration in graphs. We use this concept to present a pictorial representation of graphs realized by finite abelian groups. We discus some partial orders on the set $\mathcal{T}_{p_1 \cdots p_n}$ of all graphs realized by finite abelian $p_r$-groups, where each $p_r$, $1\leq r\leq n$, is a prime number. We show that each finite abelian $p_r$-group of rank $n$ can be identified with \textit{saturated chains} of \textit{Young diagrams} in the poset $\mathcal{T}_{p_1 \cdots p_n}$. We present a combinatorial formula which represents the degree of a projective representation of a symmetric group. This formula determines the number of different \textit{saturated chains} in $\mathcal{T}_{p_1 \cdots p_n}$ and the number of finite abelian groups of different orders.
\vskip 3mm

\noindent{\footnotesize Keywords: Degenerations, Finite abelian groups, Threshold graph, Partial order.}

\vskip 3mm
\noindent {\footnotesize AMS subject classification: Primary: 13C70, 05C25.}

\section{\bf Introduction}

A notion of degeneration in groups was introduced in \cite{KA} to parametrize the orbits in a finite abelian group under its full automorphism group by a finite distributive lattice. The authors in \cite{KA} were motivated by attempts to understand the decomposition of the weil representation associated to a finite abelian group $G$. Note that the sum of squares of the multiplicities in the Weil representation is the number of orbits in $G \times \hat{G}$ under automorphisms of a symplectic bicharacter, where $\hat{G}$ denotes the Pontryagin dual of $G$.\\ 

The above combinatorial description is one of the explorations between  groups and combinatorial structures  (posets and lattices). There is an intimate relationship between between groups and other combinatorial structures (graphs). For example, any graph $\Gamma$ give rise to its automorphism group whereas any group with its generating set give rise to a realization of a group as a graph (Cayley graph).

Recently, authors in \cite{ER} studied the \textit{group-annihilator} graph $\Gamma(G)$ realized by a finite abelian group $G$ (viewed as a $\mathbb{Z}$-module) of different ranks. The vertices of $\Gamma(G)$ are all elements of $G$ and two vertices $x ,y \in G$ are adjacent in $\Gamma(G)$ if and only if $[x : G][y : G]G = \{0\}$, where $[x : G] = \{r\in\mathbb{Z} : rG \subseteq \mathbb{Z}x\}$ is an ideal of a ring $\mathbb{Z}$. They investigated the concept of creation sequences in $\Gamma(G)$ and determined the multiplicities of eigenvalues $0$ and $-1$ of $\Gamma(G)$. Interestingly, they considered orbits of the symmetric group action: $Aut(\Gamma(G)) \times G \longrightarrow G$ and proved that the representatives of orbits are the Laplacian eigenvalues of $\Gamma(G)$. 

There are number of realizations of groups as graphs. The  generating graph \cite{LS} realized by a simple group was introduced to get an insight that might ultimately guide us to a new proof of the classification of simple groups. The graphs such as power graph \cite{CS}, intersection graph \cite{ZB} and the commuting graph \cite{BF} were introduced to study the information contained in the graph about the group.

Moreover, the realizations of rings as graphs were introduced in \cite{AL, Bk}. The aim of considering these realizations of rings as graphs is to study the interplay between combinatorial and ring theoretic properties of a ring $R$. This concept was further studied in \cite{FL, M, SR, RR} and was extended to modules over commutative rings in \cite{SR1}.

The main objective of this work is to investigate some deeper interconnections between partitions of a number, young diagrams, finite abelain groups, group homomorphisms, graph homomorphisms, posets and lattices. This investigation will lead us to develop a theory which is going to simplify the concept of degeneration of elements in groups and also provide a lattice of finite abelian groups in which each \textit{saturated chain} of length $n$ can be identified with a finite abelian $p_r$-group of rank $n$. 

This research article is organized as follows. In section 2, we discuss some results related to degeneration in groups and group-annihilator graphs realized by finite abelian groups. Section 3 is dedicated to the study of degenerations in graphs realized by finite abelian groups. We present a pictorial sketch which illustrates degeneration in graphs. Finally in section 4, we investigate multiple relations on the set $\mathcal{T}_{p_1 \cdots p_n}$ and furnish the information contained in a locally finite distributive lattice about finite abelian groups. We provide a combinatorial formula which represents degree of a projective representation of a symmetric group and the number of \textit{saturated chains} from empty set to some non-trivial member of $\mathcal{T}_{p_1 \cdots p_n}$.

 \section{\bf Preliminaries}

Let $\lambda = (\lambda_1, \lambda_2, \cdots, \lambda_r)$ be a partition of $n$ denoted by $\lambda \vdash n$, where $n\in \Z_{>0}$ is a positive integer. For any  $\mu \vdash n$, we have an abelian group of order $p^n$ and conversely every abelian group corresponds to some partition of $n$. In fact, if $H_{\mu, p} =  \mathbb{Z}/p^{{\mu}_1}\mathbb{Z} ~\oplus~ \mathbb{Z}/p^{{\mu}_2}\mathbb{Z} ~\oplus~ \cdots ~\oplus~ \mathbb{Z}/p^{{\mu}_r}\mathbb{Z}$ is a subgroup of $G_{\lambda, p}$ ($G_{\lambda, p} = \mathbb{Z}/p^{\lambda_1}\mathbb{Z} \oplus \mathbb{Z}/p^{\lambda_2}\mathbb{Z} \oplus \cdots \oplus \mathbb{Z}/p^{\lambda_r}\mathbb{Z}$ is a finite abelian $p$-group), then $\mu_1 \leq \lambda_1, \mu_2 \leq \lambda_2, \cdots, \mu_r \leq  \lambda_r$. If these inequalities holds we write $\mu \subset \lambda$, that is a \textquotedblleft containment order\textquotedblright on partitions. For example, a $p$-group $\mathbb{Z}/p^{7}\mathbb{Z} ~\oplus~ \mathbb{Z}/p\mathbb{Z} ~\oplus~ \mathbb{Z}/p\mathbb{Z}$ is of type $\lambda = (7, 1, 1)$. The possible types for its subgroup are: $(7, 1, 1), (6, 1, 1), (5, 1, 1), (4, 1, 1)$, 
\noindent$(3, 1, 1), (2, 1, 1), (1, 1, 1), 2(7, 1), 2(6, 1), 2(5, 1), 2(4, 1), 2(3,1), 2(2, 1), 2(1, 1), (7), (6), (5), (4)$, 
$\noindent (3), (2), 2(1)$. 

Note that the types $(7, 1), (6, 1), (5, 1), (4, 1), (3,1), (2, 1), (1, 1)$ are appearing twice in the sequence of partitions for a subgroup.

The authors in \cite{KA} have considered the group action: $Aut(G) \times G \rightarrow G$, where $Aut(G)$ is an automorphism group of $G$ and studied $Aut(G)\setminus G$, the set of all disjoint $Aut(G)$-orbits in $G$. The group $\mathbb{Z}/p^{k}\mathbb{Z}$ has $k$ orbits of non-zero elements under the action of its automorphism group, represented by elements $1, p, \cdots, p^{k-1}$. We denote orbits of the group action: $Aut(\mathbb{Z}/p^{k}\mathbb{Z}) \times \mathbb{Z}/p^{k}\mathbb{Z}\longrightarrow \mathbb{Z}/p^{k}\mathbb{Z}$ by $\mathcal{O}_{k, p^{m}}$, where $0\leq m \leq k-1$.

Miller \cite{ML}, Schwachh\"ofer and Stroppel \cite{SS} provided some well known formulae for the cardinality of the set $Aut(G_{\lambda, p})$ $\setminus$ $G_{\lambda, p}$ . 

\begin{defn}\label{df1}
(Degeneration in groups) \cite{KA}. Let $G_1$ and $G_2$ be two groups, then  $a \in G_1$ degenerates to $b \in G_2$, if a homomorphism $\varphi : G_1 \longrightarrow G_2$ maps $a$ into $b$ such that $\varphi(a) = b$. 
\end{defn}

The following result provide a characterization for degenerations of elements of the group $\mathbb{Z}/p^{k}\mathbb{Z}$ to elements of the group $\mathbb{Z}/p^{l}\mathbb{Z}$, where $k \leq l$.

\begin{lem}\label{lm1}\cite{KA}.
$p^{r}u \in \mathcal{O}_{k, p^{r}}$ in $\mathbb{Z}/p^{k}\mathbb{Z}$ degenerates to $p^{s}v \in \mathcal{O}_{l, p^{s}}$ in  $\mathbb{Z}/p^{l}\mathbb{Z}$ if and only if $r \leq s$ and $k-r \geq l-s$, where $u, v$ are relatively prime to $p$, $r<k$ and $s<l$. If in addition $p^{s}v \in \mathcal{O}_{l, p^{s}}$ degenerates to $p^{r}u \in \mathcal{O}_{k, p^{r}}$, then $k = l$ and $r = s$.
\end{lem}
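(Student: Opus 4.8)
The plan is to reduce everything to the classification of group homomorphisms between two cyclic $p$-groups together with a single valuation bookkeeping. Recall that a homomorphism $\varphi : \mathbb{Z}/p^k\mathbb{Z} \to \mathbb{Z}/p^l\mathbb{Z}$ is completely determined by the image $c := \varphi(1)$ of the generator, and that such a $\varphi$ is well defined precisely when the order of $c$ in $\mathbb{Z}/p^l\mathbb{Z}$ divides $p^k$, equivalently when $p^k c = 0$ in $\mathbb{Z}/p^l\mathbb{Z}$. Writing $c = p^t w$ with $w$ coprime to $p$ (the degenerate case $c=0$ cannot send anything to the nonzero element $p^s v$, so it may be discarded), the order of $c$ equals $p^{l-t}$, and the well-definedness condition becomes $l - t \le k$, i.e. $t \ge l - k$. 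This single inequality is the only constraint on the admissible homomorphisms, and it absorbs the two cases $k \ge l$ and $k < l$ uniformly.

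For the forward direction I would compute $\varphi(p^r u) = p^r u\, c = p^{r+t} u w$ in $\mathbb{Z}/p^l\mathbb{Z}$. Since $uw$ is coprime to $p$, this element lies in the orbit $\mathcal{O}_{l, p^{r+t}}$ when $r+t < l$ and equals $0$ when $r + t \ge l$. As the target $p^s v$ is nonzero (because $s < l$) and has $p$-adic valuation exactly $s$, equality forces $r + t = s$, hence $t = s - r$. Substituting this value of $t$ into the two constraints already in hand gives the assertion: nonnegativity $t = s - r \ge 0$ is exactly $r \le s$, and the well-definedness inequality $t \ge l - k$ is exactly $s - r \ge l - k$, i.e. $k - r \ge l - s$. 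The only remaining requirement, $p^s(uw) = p^s v$ in $\mathbb{Z}/p^l\mathbb{Z}$, amounts to the unit congruence $uw \equiv v \pmod{p^{l-s}}$, which is always solvable for a unit $w$ since $vu^{-1}$ is a unit modulo $p^{l-s}$; thus the unit parts impose no obstruction and the valuations carry all the information.

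The converse is the same construction run in reverse: assuming $r \le s$ and $k - r \ge l - s$, set $t := s - r$, which then satisfies $0 \le t$ and $t \ge l - k$, choose a unit $w$ with $w \equiv v u^{-1} \pmod{p^{l-s}}$, and define $\varphi(1) = p^t w$; the inequality $t \ge l-k$ guarantees $\varphi$ is well defined, and by construction $\varphi(p^r u) = p^s v$. Finally, the addendum follows by symmetry: if in addition $p^s v$ degenerates to $p^r u$, applying the criterion with the roles of $(k,r)$ and $(l,s)$ exchanged yields $s \le r$ and $l - s \ge k - r$; combining these with $r \le s$ and $k - r \ge l - s$ forces $r = s$ and $k - r = l - s$, whence $k = l$. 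The one point that needs care — and the only place the argument could go wrong — is the valuation computation inside the quotient $\mathbb{Z}/p^l\mathbb{Z}$, namely correctly accounting for when $p^{r+t} uw$ collapses to $0$; once that is pinned down, both conditions fall out mechanically as "one can hit the right valuation" ($r \le s$) and "the homomorphism exists" ($k - r \ge l - s$).
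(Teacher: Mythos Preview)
Your argument is correct. The paper does not supply its own proof of this lemma: it is stated with a citation to \cite{KA} and used as an input, so there is no in-paper proof to compare against. What you have written is precisely the standard direct argument one finds in \cite{KA}: classify all homomorphisms $\mathbb{Z}/p^k\mathbb{Z}\to\mathbb{Z}/p^l\mathbb{Z}$ by the image $c=\varphi(1)=p^t w$ of the generator, translate the well-definedness condition $p^k c=0$ into $t\ge l-k$, compute $\varphi(p^r u)=p^{r+t}uw$, and read off the valuation constraint $r+t=s$ together with a unit congruence that is always solvable. Your bookkeeping is clean, including the point that $c=0$ must be excluded because the target is nonzero, and the observation that $t=s-r<l$ is automatic from $s<l$. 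The addendum is handled correctly by symmetry.

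One cosmetic remark: in the forward direction you phrase things as ``equality forces $r+t=s$'', which is fine, but you might make explicit that two nonzero elements of $\mathbb{Z}/p^l\mathbb{Z}$ written as $p^a\cdot(\text{unit})$ with $0\le a<l$ have this representation uniquely, so matching $p^{r+t}uw=p^s v$ really does pin down both the exponent and the unit modulo $p^{l-s}$. You clearly know this, but stating it removes any ambiguity about why the valuation argument in the quotient is legitimate.
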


By Lemma \ref{lm1}, it is easy to verify that degeneracy is a partial order relation on the set of all orbits of non-zero elements in $\mathbb{Z}/p^{k}\mathbb{Z}$. The diagrammatic representation (Hasse diagram) of the set $Aut(\mathbb{Z}/p^{k}\mathbb{Z})\setminus \mathbb{Z}/p^{k}\mathbb{Z}$ with respect to degeneracy, which is called a fundamental poset is presented in [Figure 1 \cite{KA}]. 

Let $a = (a_1, a_2, \cdots, a_r) \in G_{\lambda, p}$, the \textit{ideal of a} in $Aut(G_{\lambda, p})$ $\setminus$ $G_{\lambda, p}$ denoted by $I(a)$ is the ideal generated by orbits of non-zero coordinates $a_i\in \mathbb{Z}/p^{\lambda_{i}}\mathbb{Z}$. One of the explorations between ideals of posets, partitions and orbits of finite abelian groups is the following interesting result.

\begin{thm}\label{thm1}\cite{KA}.
Let $\lambda$ and $\mu$ be any two given partitions and $a\in G_{\lambda, p}$, $b\in G_{\mu, p}$. Then $a$ degenerates to $b$ if and only if $I(b) \subset I(a)$.  
\end{thm}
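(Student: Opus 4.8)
The plan is to prove both implications by reducing the statement to the coordinate-wise situation governed by Lemma \ref{lm1}. I would write $a = (a_1, \dots, a_r)$ with $a_i \in \mathbb{Z}/p^{\lambda_i}\mathbb{Z}$ and $b = (b_1, \dots, b_s)$ with $b_j \in \mathbb{Z}/p^{\mu_j}\mathbb{Z}$, recording each non-zero coordinate by its valuation as $a_i = p^{r_i} u_i$ and $b_j = p^{s_j} v_j$ with $u_i, v_j$ coprime to $p$. Then $I(a)$ is the order ideal of the fundamental poset generated by the orbits $\mathcal{O}_{\lambda_i, p^{r_i}}$ of the non-zero $a_i$, and since $I(a)$ is a down-set, $I(b) \subset I(a)$ means precisely that for every non-zero $b_j$ there is a non-zero $a_i$ with $\mathcal{O}_{\mu_j, p^{s_j}} \leq \mathcal{O}_{\lambda_i, p^{r_i}}$; by Lemma \ref{lm1} this is the arithmetic condition $r_i \leq s_j$ and $\lambda_i - r_i \geq \mu_j - s_j$.

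For the implication $I(b) \subset I(a) \Rightarrow a$ degenerates to $b$, I would construct an explicit homomorphism. For each $j$ with $b_j \neq 0$, the hypothesis supplies an index $i(j)$ with $\mathcal{O}_{\mu_j, p^{s_j}} \leq \mathcal{O}_{\lambda_{i(j)}, p^{r_{i(j)}}}$, and the "if" part of Lemma \ref{lm1} then yields a homomorphism $\psi_j : \mathbb{Z}/p^{\lambda_{i(j)}}\mathbb{Z} \to \mathbb{Z}/p^{\mu_j}\mathbb{Z}$ with $\psi_j(a_{i(j)}) = b_j$. Defining $\varphi : G_{\lambda, p} \to G_{\mu, p}$ coordinate-wise by letting its $j$-th component be $x \mapsto \psi_j(x_{i(j)})$ (and the zero map whenever $b_j = 0$) produces a homomorphism with $\varphi(a) = b$, so $a$ degenerates to $b$.

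For the converse I would suppose $\varphi : G_{\lambda, p} \to G_{\mu, p}$ satisfies $\varphi(a) = b$ and write $\varphi$ through its components $\varphi_{ji} : \mathbb{Z}/p^{\lambda_i}\mathbb{Z} \to \mathbb{Z}/p^{\mu_j}\mathbb{Z}$, so that $b_j = \sum_i \varphi_{ji}(a_i)$. Fixing $j$ with $b_j \neq 0$ and using that the valuation of a sum is at least the minimum of the valuations of its terms, I would single out an index $i_0$ with $\varphi_{j i_0}(a_{i_0}) \neq 0$ and $v_p\big(\varphi_{j i_0}(a_{i_0})\big) \leq v_p(b_j) = s_j$; in particular $a_{i_0} \neq 0$. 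Since $\varphi_{j i_0}$ realises a degeneration of $a_{i_0}$ to the non-zero element $\varphi_{j i_0}(a_{i_0})$, the "only if" part of Lemma \ref{lm1} gives $r_{i_0} \leq v_p\big(\varphi_{j i_0}(a_{i_0})\big)$ and $\lambda_{i_0} - r_{i_0} \geq \mu_j - v_p\big(\varphi_{j i_0}(a_{i_0})\big)$; combining these with $v_p\big(\varphi_{j i_0}(a_{i_0})\big) \leq s_j$ yields $r_{i_0} \leq s_j$ and $\lambda_{i_0} - r_{i_0} \geq \mu_j - s_j$, that is $\mathcal{O}_{\mu_j, p^{s_j}} \leq \mathcal{O}_{\lambda_{i_0}, p^{r_{i_0}}}$. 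As every generator of $I(b)$ then lies in $I(a)$, I conclude $I(b) \subset I(a)$.

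The main obstacle is the converse, and specifically the passage through the sum $b_j = \sum_i \varphi_{ji}(a_i)$: a single coordinate of $b$ is a combination of the images of all coordinates of $a$, so one cannot match it to one particular $a_i$ before analysing valuations. The device that resolves this is to isolate a term of minimal valuation, which simultaneously controls the valuation of $b_j$ through the ultrametric inequality and, via Lemma \ref{lm1} applied to that single term, the order of $b_j$. A secondary point to pin down is that membership $\mathcal{O}_{\mu_j, p^{s_j}} \in I(a)$ is equivalent to being dominated by one generator $\mathcal{O}_{\lambda_i, p^{r_i}}$, which is the defining property of the generated order ideal and is what allows the coordinate-wise Lemma \ref{lm1} to carry the whole argument.
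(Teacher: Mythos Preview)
The paper does not prove this theorem: it is quoted verbatim from \cite{KA} (Dutta--Prasad) and stated without proof, so there is no argument in the present paper to compare yours against. Your proposal is a correct self-contained proof, and it is essentially the standard argument one would expect (and indeed the one used in \cite{KA}): reduce to the cyclic case via Lemma~\ref{lm1}, build the homomorphism coordinate-wise for the ``if'' direction, and for the ``only if'' direction decompose $\varphi$ into its matrix of cyclic components and pick a term of minimal $p$-adic valuation in the sum $b_j=\sum_i \varphi_{ji}(a_i)$ to produce the required dominating generator of $I(a)$. The valuation bookkeeping you set out is accurate: from $v_p(b_j)\ge \min_i v_p(\varphi_{ji}(a_i))$ you extract $i_0$ with $t:=v_p(\varphi_{ji_0}(a_{i_0}))\le s_j$, and Lemma~\ref{lm1} applied to the single cyclic map $\varphi_{ji_0}$ gives $r_{i_0}\le t$ and $\lambda_{i_0}-r_{i_0}\ge \mu_j-t$, which combine with $t\le s_j$ exactly as you say.
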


The enumeration of orbits as ideals, first as counting ideals in terms of their boundaries, and the second as counting them in terms of anti chains of maximal elements is presented in [Example 6.1, 6.2 \cite{KA}].

Please see sections 7 and 8 of \cite{KA} for results related to embedding of the lattice of orbits of $G_{\lambda, p}$ into the lattice of characteristic subgroups of $G_{\lambda, p}$, formula for the order of the characteristic subgroup associated to an orbit, computation of a monic polynomial in $p$ (with integer coefficients) using mobius inversion formula representing cardinality of the orbit in $G_{\lambda, p}$.

Let $\Gamma = (V, E)$ be a simple connected graph and let $\Gamma_1$ and $\Gamma_2$ be two simple connected graphs, recall a mapping $\phi: V(\Gamma_1) \rightarrow V(\Gamma_2)$ is a \textit{homomorphism} if it preserves edges, that is, for any edge $(u, v)$ of $\Gamma_1$, $(\phi(u), \phi(v))$ is an edge of $\Gamma_2$, where $u, v\in V(\Gamma_1)$. A homomorphism  $\phi: V(\Gamma_1) \rightarrow V(\Gamma_2)$ is faithful when there is an edge between two pre images $\phi^{-1}(u)$ and $\phi^{-1}(u)$ such that $(u, v)$ is an edge of $\Gamma_2$, a faithful bijective homomorphism is an \textit{isomorphism} and in this case we write $\Gamma_1 \cong \Gamma_2$. An isomorphism from $\Gamma$ to itself is an \textit{automorphism} of $\Gamma$, it is well known that set of automorphisms of $\Gamma$ forms a group under composition, we denote the group of automorphisms of $\Gamma$ by $Aut(\Gamma)$. Understanding the automorphism group of a graph is a guiding principle for understanding objects by their symmetries. 

Consider the group action: $Aut(\Gamma) ~acting~on ~V(\Gamma)$ by some permutation of $Aut(\Gamma)$, that is, 

\hskip .9cm \hskip .9cm \hskip .9cm \hskip .9cm \hskip .9cm \hskip .9cm $Aut(\Gamma) \times V(\Gamma) \rightarrow V(\Gamma)$, 

\hskip .9cm \hskip .9cm \hskip .9cm \hskip .9cm \hskip .9cm \hskip .9cm \hskip .9cm \hskip .3cm $\sigma(v) = u$,

\noindent where $\sigma \in Aut(\Gamma)$ and $v, u\in V(\Gamma)$ are any two vertices of $\Gamma$. This group action is called a \textit{symmetric action} \cite{ER}.

Consider a finite abelian non-trivial group $G$ with identity element $0$ and view $G$ as a $\mathbb{Z}$-module. For $a\in G$, set $[a : G] =\{x\in \mathbb{Z} ~|~ xG\subseteq \mathbb{Z}a\}$, which clearly is an ideal of $\mathbb{Z}.$ For $a \in G$, $G/\mathbb{Z}a$ is a $\mathbb{Z}$-module. So,  $[a : G]$ is a annihilator of $G/\mathbb{Z}a$, $[a:G]$ is called a $a$-annihilator of $G.$ Also, an element $a$ is called an \textit{ideal-annihilator} of $G$ if there exists a non-zero element $b$ of $G$ such that $[a : G][b : G]G = \{0\}$, where $[a : G][b : G]$ denotes the product of ideals of $\mathbb{Z}$. The element $0$ is a trivial ideal-annihilator of $G$, since $[0 : G][b : G]G = ann(G)[b : G]G = \{0\}$, $ann(G)$ is an annihilator of $G$ in $\mathbb{Z}$.

Given an abelian group $G$, the \textit{group-annihilator} graph is defined to be the graph $\Gamma(G) = (V(\Gamma(G))$, $E(\Gamma(G)))$ with vertex set $V(\Gamma(G))= G$ and for two distinct $a, b\in V(\Gamma(G))$, the vertices $a$ and $b$ are adjacent in $\Gamma(G)$ if and only if $[a : G][b : G]G = \{0\}$, that is, $E(G) = \{(a, b)\in G \times G : [a : G][b : G]G = \{0\}\}$. 

For a cyclic group $G = \mathbb{Z}/p^{n}\Z$ ($n\geq1$), it is easy to verify that the orbits of the action: $Aut(G) \times G \longrightarrow G$ are same as the orbits of the symmetric action:   $Aut(\Gamma(G)) \times G \longrightarrow G$ which are given as follows,

\begin{center}
$\mathcal{O}_{n, p^i} = \{p^i\alpha (mod~p^{n})\mid \alpha \in \Z, (\alpha, p) =1\}$,\end{center}

where $i \in [0, n]$. Furthermore, for $0\leq i< j \leq n$,  $p^i\alpha \equiv p^j \alpha' (mod~p^{n})\text{ where } (\alpha, p) = 1 \text{ and } (\alpha', p) = 1$. Consequently, we have for $i \neq j$, $\mathcal{O}_{n, p^i} \cap  \mathcal{O}_{n, p^j} = \emptyset$.

Any element $a \in \Z/p^{n}\Z$ can be expressed as,

\begin{center}
$a \equiv p^{n-1}b_1 + p^{n-2} b_2 +\cdots + p b_{n-1 } +b_{n} (mod~p^{n})$,\end{center}

where $b_i \in [1, p-1]$. If $a \in \mathcal{O}_{n, 1}$, then $b_{n} \neq 0.$ So, $|\mathcal{O}_{n, 1}| = p^{n-1} (p-1) = \phi (p^{n})$. If $a' \in \mathcal{O}_{n, p}$, then for some $a \in \mathcal{O}_{n, 1}$ $a' = pa$, that is, $b_{n}\neq 0$, so $|\mathcal{O}_{n, p}|
= \frac{\phi (p^{n})}{p}$. Similarly, for $i \in [0, n]$, we have
  $|\mathcal{O}_{n ,p^i}| = \frac{\phi(p^{n})}{p^i}$.
  
\begin{prop}\cite{ER}. Let $G =\Z/p^{n}\Z$ be a cyclic group of order $p^{n}$, where $n \ge 2$. Then for each $a\in \mathcal{O}_{n, p^i}$ with $i\in [1, n]$, the $a-$annihilator of $G$ is $[a: G] = p^i \Z $
\end{prop}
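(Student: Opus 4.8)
The plan is to compute the cyclic submodule $\Z a$ explicitly and then reduce the containment condition defining $[a:G]$ to a single divisibility statement. First I would observe that since $a\in\mathcal{O}_{n,p^i}$ we may write $a\equiv p^i\alpha\pmod{p^n}$ with $(\alpha,p)=1$, so that $\alpha$ is a unit in the ring $\Z/p^n\Z$. Multiplication by this unit merely permutes the residues, hence $\Z a=\Z(p^i\alpha)=\Z p^i=p^i\Z/p^n\Z$, the unique subgroup of $G$ of order $p^{\,n-i}$ consisting of all multiples of $p^i$ modulo $p^n$. The upshot is that $\Z a$ depends only on $i$ and not on the particular representative $\alpha$.

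Next I would exploit that $G$ is cyclic, generated by $1$. For any $x\in\Z$, the submodule $xG$ is generated by $x\cdot 1\equiv x\pmod{p^n}$, so the defining condition $xG\subseteq\Z a$ is equivalent to the single membership $x\bmod p^n\in\Z a$. This is the step that collapses a statement about all of $G$ into a statement about one element, and it is exactly where cyclicity is used.

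Then I would translate this membership back into integer divisibility. The element $x\bmod p^n$ lies in the subgroup of multiples of $p^i$ precisely when $x\equiv kp^i\pmod{p^n}$ for some $k\in\Z$; writing $x=kp^i+mp^n=p^i(k+mp^{\,n-i})$ shows $p^i\mid x$ in $\Z$. Conversely, $p^i\mid x$ plainly forces $x\bmod p^n\in p^i\Z/p^n\Z=\Z a$. Hence $[a:G]=\{x\in\Z:\ p^i\mid x\}=p^i\Z$, as claimed.

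The computation is elementary, and I do not anticipate a genuine obstacle. The only point requiring care is the first step, where one must invoke that $\alpha$ coprime to $p$ is invertible modulo $p^n$ so that $\Z a$ coincides with the subgroup generated by $p^i$; once that identification is in place, everything else is a routine unwinding of the definition of $[a:G]$ together with the cyclicity of $G$.
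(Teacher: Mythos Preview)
Your argument is correct. The paper does not actually supply its own proof of this proposition: the result is quoted verbatim from \cite{ER} and stated without proof, serving only as background for the description of adjacency in $\Gamma(G)$ that follows. There is therefore nothing to compare your approach against in this paper; what you have written is the standard elementary computation and would be the expected proof in the cited source as well.
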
 

Thus if we consider the symmetric group action: $Aut(\Gamma(G)) \times G \longrightarrow G$, then for $G =\Z/p^{n}\Z$, the group-annihilator graph realized by $G$ is defined as $\Gamma (G) = (V(\Gamma(G)), E(\Gamma(G)))$,
where $V(\Gamma(G)) = \Z/p^{n}\Z$ and two vertices $u \in \mathcal{O}_{n, p^i}$, $v \in \mathcal{O}_{n, p^j}$ are adjacent in $\Gamma(G)$ if and only if $ i + j \geq n$.

Therefore, from the above observation it follows that the vertices of the graph $\Gamma(G)$ are parametrized by representatives of orbits of the group action: $Aut(\Gamma(G)) \times G \longrightarrow G$. Thus an element $0\in \mathcal{O}_{n, p^{n}}$ of $G$ is adjacent to all vertices in $\Gamma (G)$, elements $a \in \mathcal{O}_{n, 1}$ which are prime to order of $G$ are adjacent to $0$ only in $\Gamma (G)$. Furthermore, elements of the orbit $\mathcal{O}_{n, p}$ are adjacent to $0$ and elements of the orbit $\mathcal{O}_{n, p^{n-1}}$, elements of the orbit $\mathcal{O}_{n, p^2}$ are adjacent to $0$ and elements of the orbits $\mathcal{O}_{n, p^{n-1}}$,  $\mathcal{O}_{n, p^{n-2}}$. Thus, for $k \geq 1$, elements of the orbit
 $\mathcal{O}_{n, p^k}$ are adjacent to elements of the orbits $\mathcal{O}_{n, p^{n-k}}$,  $\mathcal{O}_{n, p^{n-k + 1}}, \cdots, \mathcal{O}_{n, p^{n -1}}$, $\mathcal{O}_{n, p^{n}}$.
 
\begin{thm} \cite{ER}. Let $n$ be a positive integer. Then for the $p$-group $G = (\Z/p^{n}\Z)^{\ell}  $ of rank $\ell\geq2,$ and $(a_1,\ldots,a_l)\in G$, the $(a_1,\ldots,a_l)$-annihilator of $G$ is $p^n\Z.$ In particular the corresponding group-annihilator graph realized by $G$ is a complete graph. 
\end{thm}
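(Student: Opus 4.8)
The plan is to compute the annihilator $[(a_1,\ldots,a_\ell) : G]$ directly from its definition $[a:G]=\{x\in\Z : xG\subseteq \Z a\}$, and to exploit the contrast between the two subgroups involved: $\Z a$ is always cyclic, whereas $xG$, for $\ell \geq 2$, is non-cyclic unless it is trivial. The key structural input is that every subgroup of a cyclic group is cyclic; this will force $xG$ to collapse to $\{0\}$ whenever $xG \subseteq \Z a$, regardless of which $a$ we started with.

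First I would fix $x \in \Z$ and compute the subgroup $xG$ explicitly. Writing $x = p^m u$ with $(u,p)=1$ and $m \geq 0$, the element $u$ is a unit in $\Z/p^n\Z$, so $x(\Z/p^n\Z) = p^m(\Z/p^n\Z)$. Hence for $m < n$ one has $xG = (p^m(\Z/p^n\Z))^\ell \cong (\Z/p^{n-m}\Z)^\ell$, a group of $p$-rank $\ell \geq 2$, while for $m \geq n$ one has $xG = \{0\}$.

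Next I would observe that $\Z a$ is cyclic for every $a \in G$, and therefore so is each of its subgroups. If $x \in [a:G]$, then $xG \subseteq \Z a$ is a subgroup of a cyclic group, hence itself cyclic; but by the previous step $xG$ has $p$-rank $\ell \geq 2$ whenever $m < n$. This rules out $m < n$ and forces $m \geq n$, i.e. $p^n \mid x$, so $[a:G] \subseteq p^n\Z$. The reverse inclusion is immediate since $p^n G = \{0\} \subseteq \Z a$, whence $[a:G] = p^n\Z$ independently of the chosen element $a$.

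Finally, completeness of $\Gamma(G)$ follows formally: for any two vertices $a,b$ the product of ideals satisfies $[a:G][b:G]G \subseteq [a:G]G = p^n G = \{0\}$, so every pair $\{a,b\}$ is an edge. The main obstacle is conceptual rather than computational and lies in the third step: recognizing that the cyclicity of $\Z a$, combined with the rank-$\ell$ obstruction carried by $xG$, produces a conclusion \emph{uniform} in $a$---the annihilator never depends on which element one starts from, which is exactly what makes the realized graph complete.
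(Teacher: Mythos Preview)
Your argument is correct. The paper itself does not supply a proof of this theorem: it is quoted verbatim from \cite{ER} as a preliminary result, so there is no in-paper proof to compare against. Assessed on its own terms, your rank-versus-cyclicity argument is sound. The only step worth stressing is the identification $xG=(p^m(\Z/p^n\Z))^{\ell}$; once that is in place, the contrast between the rank-$\ell$ group $xG$ (for $m<n$) and the cyclic group $\Z a$ forces $p^n\mid x$ uniformly in $a$, and completeness of $\Gamma(G)$ follows immediately from $[a:G]G=p^nG=\{0\}$. Nothing is missing.
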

 
Note that the action of $Aut(\Gamma((\mathbb{Z}/p\mathbb{Z})^{\ell}))$ on $(\mathbb{Z}/p\mathbb{Z})^{\ell}$ is transitive, since an automorphism of $\Gamma((\mathbb{Z}/p\mathbb{Z})^{\ell})$ map any vertex to any other vertex and this does not place any restriction on where any of the other $p^{\ell} - 1$ vertices are mapped, as they are all mutually connected in $\Gamma((\mathbb{Z}/p\mathbb{Z})^{\ell})$. This implies $Aut(\Gamma((\mathbb{Z}/p\mathbb{Z})^{\ell})) \setminus(\mathbb{Z}/p\mathbb{Z})^{\ell}$ is a single orbit of order $p^{\ell}$.

For more information regarding $a-$annihilators, $(a, b)-$annihilators and $(a_1, a_2, \cdots, a_l)$
$-$annihilators of finite abelian $p$-groups, please see section 3 of \cite{ER}.  

We conclude this section by an example which illustrates the parametrization of vertices of the group-annihilator graph $\Gamma(G)$ by representatives of orbits of the symmetric action on $G$.

\begin{exm} Let $G = \mathbb{Z}/2^4\mathbb{Z}$ be a finite abelian. Consider the group action: $Aut(\Gamma(G)) \times G \longrightarrow G$. The orbits of this action are: $\mathcal{O}_{4, 2^4} = \{0\}$, $\mathcal{O}_{4, 1} = \{1, 3, 5, 7\}$, $\mathcal{O}_{4, 2} = \{2, 6, 10, 14\} = \{2a ~|~ (a, 2) = 1\}$, $\mathcal{O}_{4, 2^2} = \{4, 12\} = \{2^2a ~|~ (a, 2) = 1\} $ and $\mathcal{O}_{4, 2^3} = \{8\} = \{2^3a ~|~ (a, 2) = 1\}$. Note that orbits of elements $3, 5, 7$ are same as the orbit of $1$, orbits of $6, 10, 14$ are same as the orbit of $2$ and orbit of $12$ is same as the orbit of $4$.  Therefore, the group $G$ has $4$ orbits of nonzero elements under the action of $Aut(\Gamma(G))$ represented by $1, 2, 2^2, 2^3$. The group-annihilator graph realized by $G$ with its orbits is shown in Figure \eqref{1}.\end{exm} 

\begin{figure}[H]
 \begin{center}
 \includegraphics[scale=.415]{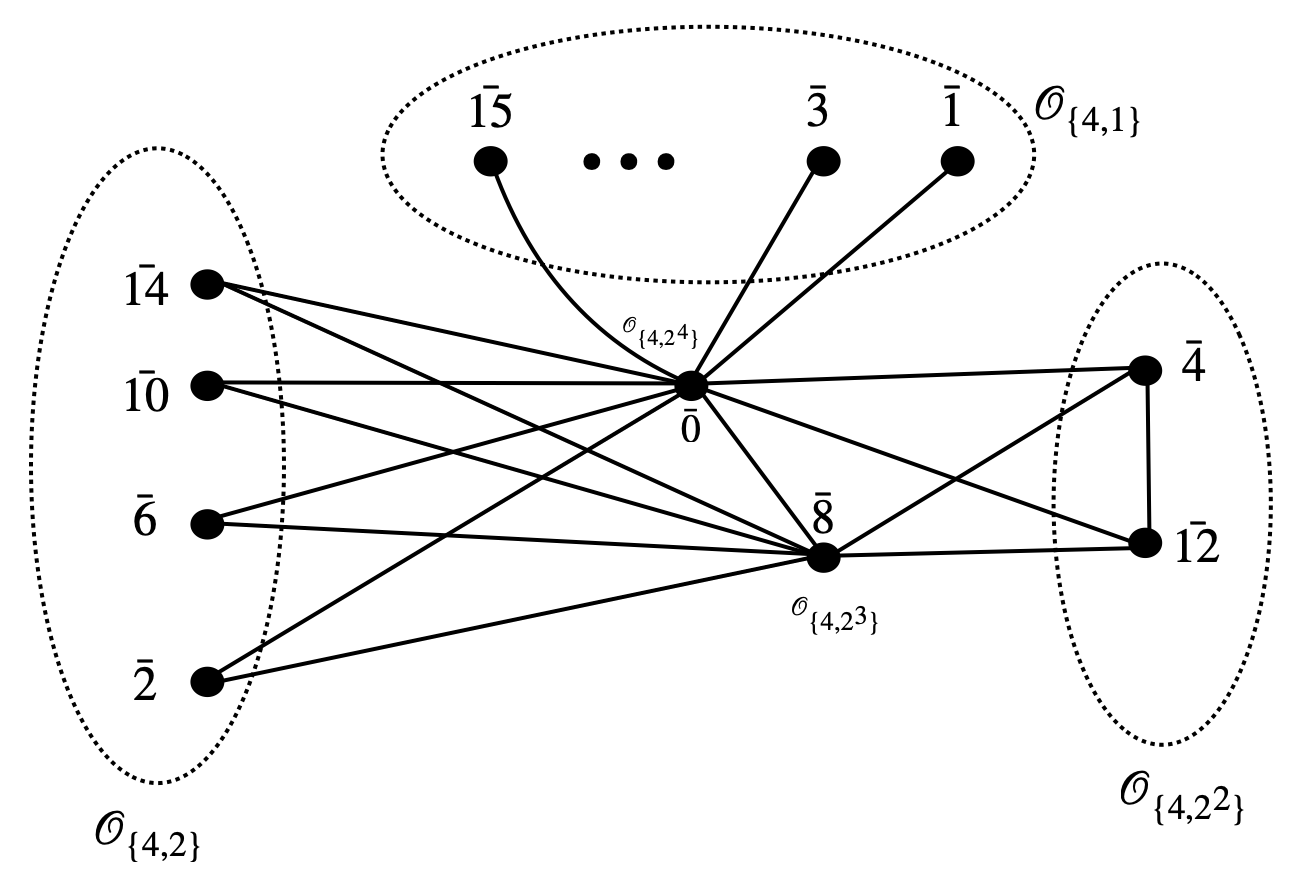}
 \end{center}
\caption{$\Gamma(\Z/2^{4}\Z)$ with its orbits}
\label{1}
 \end{figure}

\section{Degeneration in graphs}
This section is devoted to the study of degeneration in graphs. We show that every group homomorphism is a graph homomorphism. We employ the methods of degeneration in graphs to simply the techniques used to establish degenerations of elements in finite abelian groups \cite{KA}. 

As far as groups are concerned, there are always homomorphisms (trivial homomorphisms) from one group to another. Any \textit{source group} (a group where from we have the map) can be mapped by a homomorphism into \textit{target group} (a group where the elements are mapped) by simply sending all of its elements to
the identity of the target group. In fact, the study of kernels is very important in algebraic structures. In the context of simple graphs, the notion of a homomorphism is far more restrictive. Indeed, there need not be a homomorphism between two graphs, and these cases are as much a part of the theory as those where homomorphisms do exist. There are other categories where homomorphisms do not always exist between two objects, for example, the category of bounded lattices or that of semi-groups.

The answer to the question that \enquote{every group homomorphism is a graph homomorphism} is affirmative, and the same is discussed in the following result. Note that the orbits of elements of actions (automorphism group and symmetric) on finite abelian $p$-group of rank one coincide and it can be explored further on abelian $p$-groups of different ranks.

\begin{prop}\label{prp1}
Every group homomorphism which maps elements from orbits $\mathcal{O}_{k,p^i}$ to orbits $\mathcal{O}_{l,p^j}$ is a graph homomorphism, where $1\leq i\leq k$, $1\leq j\leq l$ and $k \leq l$. 
\end{prop}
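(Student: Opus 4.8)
The plan is to reduce the statement to a single inequality about orbit indices and to verify it using the well-definedness constraint on the homomorphism together with the hypothesis $k \leq l$. First I would recall that a group homomorphism $\varphi : \mathbb{Z}/p^{k}\mathbb{Z} \rightarrow \mathbb{Z}/p^{l}\mathbb{Z}$ is completely determined by the image $c = \varphi(1)$, and that the relation $p^{k}\cdot 1 = 0$ in the source forces $p^{k}c = 0$ in the target. Writing $c = p^{s}w$ with $(w, p) = 1$ (or $c = 0$), this constraint is exactly $k + s \geq l$, i.e. $s \geq l - k$. This is the only algebraic input about $\varphi$ that I expect to need.

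Next I would compute the effect of $\varphi$ on orbits. For a typical element $a = p^{i}u \in \mathcal{O}_{k, p^{i}}$ with $(u, p) = 1$, one has $\varphi(a) = a c = p^{i+s}(uw)$, and since $(uw, p) = 1$ this lies in $\mathcal{O}_{l, p^{i+s}}$ when $i + s < l$ and equals $0 \in \mathcal{O}_{l, p^{l}}$ when $i + s \geq l$. In either case $\varphi$ carries the whole orbit $\mathcal{O}_{k, p^{i}}$ into the single orbit $\mathcal{O}_{l, p^{j}}$ with $j = \min(i + s, l)$; this already justifies the phrase \enquote{maps elements from orbits to orbits} in the statement, so that hypothesis is automatic rather than restrictive.

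The heart of the argument is to check that adjacency is preserved. Recall that in $\Gamma(\mathbb{Z}/p^{k}\mathbb{Z})$ two vertices $u \in \mathcal{O}_{k, p^{i_1}}$ and $v \in \mathcal{O}_{k, p^{i_2}}$ are adjacent precisely when $i_1 + i_2 \geq k$, and likewise for the target with threshold $l$. So I would take an edge $(u, v)$, hence $i_1 + i_2 \geq k$, and show that $j_1 + j_2 \geq l$ where $j_t = \min(i_t + s, l)$. Splitting into three cases according to whether $i_1 + s$ and $i_2 + s$ reach $l$: if both do, then $j_1 + j_2 = 2l \geq l$; if exactly one does, then $j_1 + j_2 \geq l$ trivially since the other summand is nonnegative; and if neither does, then $j_1 + j_2 = (i_1 + i_2) + 2s \geq k + 2(l - k) = 2l - k \geq l$, where the last step uses $k \leq l$. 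Thus $(\varphi(u), \varphi(v))$ satisfies the target adjacency condition, which is the required conclusion.

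The one delicate point I expect to be the main obstacle is the degenerate situation where an edge collapses, i.e. $\varphi(u) = \varphi(v)$, which happens exactly when both $i_1 + s \geq l$ and $i_2 + s \geq l$, forcing both images to equal $0$. Under the strict simple-graph reading $(0,0)$ is not an edge, so to make the conclusion literally correct I would either invoke the convention, consistent with the annihilator condition $[0:G][0:G]G = \{0\}$, that such a vertex carries a loop, or else phrase the statement as a homomorphism of the induced orbit graphs on $\{\mathcal{O}_{k, p^{i}} : 1 \leq i \leq k\}$ and $\{\mathcal{O}_{l, p^{j}} : 1 \leq j \leq l\}$, which is the reading suggested by the index ranges appearing in the statement. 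Everything else reduces to the short direct computation above.
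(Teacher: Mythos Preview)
Your argument is correct and follows the same starting point as the paper: both determine the homomorphism by the image of $1$ and track how orbits are carried to orbits. The execution, however, differs. The paper proceeds by listing, for each possible orbit containing $\tau(1)$, the resulting orbit-to-orbit correspondence and then asserting that adjacencies are preserved; the verification of the edge condition is left largely implicit. You instead extract the single numerical constraint $s \ge l-k$ on the valuation of $\varphi(1)$ and reduce the whole question to the inequality $j_1+j_2 \ge l$, which you dispatch in one line via $i_1+i_2+2s \ge k + 2(l-k) = 2l-k \ge l$. This is more economical and makes transparent exactly where the hypothesis $k \le l$ is used.

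You are also more careful than the paper on one point: the collapsing case $\varphi(u)=\varphi(v)=0$ genuinely occurs (e.g.\ when $k=l$ and $s\ge 1$, an edge between $\mathcal{O}_{k,p^{k-1}}$ and $\mathcal{O}_{k,p^{k}}$ is sent to the pair $(0,0)$), and the paper does not comment on it. Your two proposed resolutions---reading the annihilator condition as placing a loop at $0$, or viewing the map at the level of orbit graphs---are both reasonable and consistent with how the surrounding material treats $0$ as a universal neighbour.
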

\begin{proof}
The group homomorphisms are uniquely determined by the image of unity element in the target group and order of the element divides order of unity in the source group. Let $\tau(a)$ be the image of unity in the target group. Therefore, we have $\tau(a) = a_1, a_2, \cdots, a_{p^k}$, where $a_1, a_p, \cdots, a_{p^{k}}$ are elements of orbits, $\mathcal{O}_{l,1}$, $\mathcal{O}_{l,p}$, $\cdots$, $\mathcal{O}_{l,p^k}$. Note that $k\leq l$, therefore we have the following inequalities concerning the cardinalities of obits,
\begin{center}
$|\mathcal{O}_{k,1}| \leq |\mathcal{O}_{l,1}|$,

$|\mathcal{O}_{k,p}| \leq |\mathcal{O}_{l,p}|$,

\hskip .1cm \vdots

$|\mathcal{O}_{k,p^k}| \leq |\mathcal{O}_{l,p^k}|$.
\end{center} 
If $\tau(a)\in \mathcal{O}_{l, 1}$, then under the monomorphism the elements of orbits are mapped as,

\begin{center}
$ \mathcal{O}_{k,1} \xhookrightarrow{~1-1~} \mathcal{O}_{l,1}$,

$ \mathcal{O}_{k,p} \xhookrightarrow{~1-1~} \mathcal{O}_{l,p}$,

\hskip .1cm \vdots

$ \mathcal{O}_{k,p^{k-1}} \xhookrightarrow{~1-1~} \mathcal{O}_{l,p^{k-1}}$,

$\mathcal{O}_{k,p^{k}} \xhookrightarrow{~1-1~} \mathcal{O}_{l,p^{l}}$.
\end{center}

If $\tau(a)\in \mathcal{O}_{l,p}$, then  elements of orbits are mapped as,

\begin{center}
$ \mathcal{O}_{k,1} \twoheadrightarrow  \mathcal{O}_{l,p}$,

$ \mathcal{O}_{k,p} \twoheadrightarrow \mathcal{O}_{l,p^2}$,

\hskip .1cm \vdots

$ \mathcal{O}_{k,p^{k-1}} \twoheadrightarrow \mathcal{O}_{l,p^{k}}$,

$\mathcal{O}_{k,p^{k}} \twoheadrightarrow \mathcal{O}_{l,p^{l}}$.
\end{center}
 
Thus it follows that if $\tau(a)\in \mathcal{O}_{l,p^t}$ for $(0\leq t\leq k-1)$, then every element of the orbit $\mathcal{O}_{k,p^t}$  is mapped to elements of the orbit $\mathcal{O}_{l,p^{t+1}}$.

Under the symmetric action the orbits of vertices are same as the orbits listed above. Note that the vertices of the orbit $\mathcal{O}_{k,1}$ are only adjacent to the vertex in $\mathcal{O}_{k,p^k}$, vertices of the orbit $\mathcal{O}_{k,p}$ are adjacent to vertices in $\mathcal{O}_{k,p^k}$ and $\mathcal{O}_{k,p^{k-1}}$ and so on. Thus if $\tau(a)\in \mathcal{O}_{l,1}$, then for $0\leq i\leq j\leq k$, every edge $(u, v) \in  \mathcal{O}_{k,p^i} \times \mathcal{O}_{k,p^j}$ is mapped to edges $(\tau(u), \tau(v)) \in  \mathcal{O}_{l,p^r} \times \mathcal{O}_{l,p^s}$, where $0\leq r\leq s\leq l$. Therefore $\tau$ is a graph homomorphism. Similarly it can be verified that all other group homomorphisms are graph homomorphisms, since the adjacencies are preserved under all group homomorphisms.
\end{proof}

\begin{rem}
The converse of the preceding result is not true, that is, a graph homomorphism between two graphs realised by some groups need not to be a group homomorphism. To illustrate this we consider the \enquote{distribution of edges in orbits}. Theoretically,  distribution of edges is carried out in a way that for sufficiently large $l$, a graph homomorphism is acting on vertices in orbits $\mathcal{O}_{k,p^k}$, $\mathcal{O}_{k,1}$ such that $\mathcal{O}_{k,p^{k}} \xhookrightarrow{identity} \mathcal{O}_{l,p^{l}}$, $\mathcal{O}_{k,p^{k-1}} \xhookrightarrow{identity} \mathcal{O}_{l,p^{k-1}}$, $\cdots$, $\mathcal{O}_{k,p} \xhookrightarrow{identity} \mathcal{O}_{l,p}$. Some vertices of $\mathcal{O}_{k,1}$ are mapped to itself in $\mathcal{O}_{l,1}$ whereas the remaining are mapped to vertices in $\mathcal{O}_{l,p}$. So, under the above distribution some edges in $\mathcal{O}_{k,p^{k}} \times \mathcal{O}_{k,1}$ are mapped to edges in $\mathcal{O}_{l,p^{l}} \times \mathcal{O}_{l,1}$, whereas the remaining edges in $\mathcal{O}_{k,p^{k}} \times \mathcal{O}_{k,1}$ are mapped to edges in $\mathcal{O}_{l,p^{l}} \times \mathcal{O}_{l,p}$. Thus if $x \neq y$ are two elements of $\mathcal{O}_{k,1}$ such that $x$ is mapped to $x' \in \mathcal{O}_{l,1}$ and $y$ is mapped to $y' \in \mathcal{O}_{l,p}$, then the following equation may have no solution,

\begin{center}
$x + y(mod~p^k) = x' + y'(mod~p^l)$.
\end{center}
\end{rem}

\begin{defn} \label{df2} Let $\Gamma_1$ and $\Gamma_2$ be two simple graphs. Then $(a, b) \in E(\Gamma_1)$ degenerates to  $(u, v) \in E(\Gamma_2)$ if there exists a homomorphism $\varphi : V(\Gamma_1) \longrightarrow V(\Gamma_2)$ such that $\varphi(a, b) = (u, v)$. If every edge of $\Gamma_1$ degenerates to edges in $\Gamma_2$, then we say that $\Gamma_1$ degenerates to $\Gamma_2$.
\end{defn}

Recall that an \textit{independent part} (independent set) in a graph $\Gamma$ is a set of vertices of $\Gamma$ such that for every two vertices, there is no edge in $\Gamma$ connecting the two. Also, the \textit{complete part} (complete subgraph) in a graph $\Gamma$ is a set of vertices in $\Gamma$ such that there is an edge between every pair of vertices in $\Gamma$.

The simplified form of Lemma \eqref{lm1} is presented in the following result. We adapted the definition of degeneration in groups and make it to work for graphs which are realized by finite abelian groups.

\begin{thm}\label{thm1}
If under any graph homomorphism $\mathcal{O}_{k,p^{k}}$ is the only vertex mapped to $\mathcal{O}_{l,p^{l}}$, then the pair $(p^ru, p^su) \in \mathcal{O}_{k,p^{r}} \times \mathcal{O}_{k,p^s}$ degenerates to $(p^{r'}u, p^{s'}u) \in \mathcal{O}_{l,p^{r'}} \times \mathcal{O}_{l,p^{s'}}$ if and only if $r\leq r'$ and $s\leq s'$, where $u$ is relatively prime to $p$ and $k\leq l$.
\end{thm}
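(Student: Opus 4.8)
The plan is to reduce the pair-degeneration statement to the single-coordinate result of Lemma \ref{lm1}, and to carry out the reverse implication by an explicit construction governed by the adjacency criterion $i+j\geq k$ that defines the edges of $\Gamma(\mathbb{Z}/p^{k}\mathbb{Z})$. First I would unpack the hypothesis. Requiring that $\mathcal{O}_{k,p^{k}}$ be the only vertex mapped into $\mathcal{O}_{l,p^{l}}$ says precisely that the witnessing homomorphism has $\varphi^{-1}(\mathcal{O}_{l,p^{l}})=\mathcal{O}_{k,p^{k}}$, i.e. no nonzero orbit collapses onto the terminal orbit. This is the monomorphism case isolated in Proposition \ref{prp1}: the generator of $\mathbb{Z}/p^{k}\mathbb{Z}$ is sent to an element of order $p^{k}$, the induced assignment of orbit types is order preserving, and in particular a well-defined image type $r'$ (respectively $s'$) is attached to each source type $r$ (respectively $s$). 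Throughout I keep in force the edge conditions implicit in Definition \ref{df2}, namely $r+s\geq k$ so that $(p^{r}u,p^{s}u)\in E(\Gamma(\mathbb{Z}/p^{k}\mathbb{Z}))$ and $r'+s'\geq l$ so that its image is an edge of $\Gamma(\mathbb{Z}/p^{l}\mathbb{Z})$.

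For the forward (``only if'') implication I would argue coordinatewise. A degeneration of the pair is realized by a single $\varphi$ with $\varphi(p^{r}u)=p^{r'}u$ and $\varphi(p^{s}u)=p^{s'}u$; reading off each coordinate separately, $\varphi$ degenerates $p^{r}u\in\mathcal{O}_{k,p^{r}}$ to $p^{r'}u\in\mathcal{O}_{l,p^{r'}}$ and $p^{s}u\in\mathcal{O}_{k,p^{s}}$ to $p^{s'}u\in\mathcal{O}_{l,p^{s'}}$. Applying the first inequality furnished by Lemma \ref{lm1} to each coordinate yields $r\leq r'$ and $s\leq s'$ at once, which is the necessity half.

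For the reverse (``if'') implication I would construct the witnessing map directly rather than quote Lemma \ref{lm1}. Starting from $r\leq r'$ and $s\leq s'$ together with the standing edge conditions $r+s\geq k$ and $r'+s'\geq l$, I would prescribe $\varphi$ on orbit representatives by $\mathcal{O}_{k,p^{r}}\to\mathcal{O}_{l,p^{r'}}$, $\mathcal{O}_{k,p^{s}}\to\mathcal{O}_{l,p^{s'}}$, and $\mathcal{O}_{k,p^{k}}\to\mathcal{O}_{l,p^{l}}$, then extend monotonically to the remaining orbits. Verifying that $\varphi$ is a graph homomorphism is then purely arithmetic in the type indices: every edge of the source is a pair of types with $i+j\geq k$, and one must check that the assigned image types $i',j'$ satisfy $i'+j'\geq l$. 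The hypothesis enters exactly here, ensuring that the terminal orbit $\mathcal{O}_{l,p^{l}}$ is reserved as the image of $\mathcal{O}_{k,p^{k}}$ and of nothing else, so that the kernel condition $\varphi^{-1}(\mathcal{O}_{l,p^{l}})=\mathcal{O}_{k,p^{k}}$ is maintained.

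The step I expect to be the main obstacle is precisely this reverse construction. Unlike the single-coordinate situation of Lemma \ref{lm1}, one homomorphism must simultaneously realize both prescribed images while respecting every edge of $\Gamma(\mathbb{Z}/p^{k}\mathbb{Z})$ and the kernel constraint at once. Concretely, the delicate point is to show that the local requirements $i'+j'\geq l$ arising from all source edges $i+j\geq k$ are mutually compatible with the two prescribed assignments $r\mapsto r'$ and $s\mapsto s'$; I expect this compatibility to follow from the monotonicity of the orbit-type assignment forced by the hypothesis, together with the inequalities $r+s\geq k$ and $r'+s'\geq l$, and this is where the careful case analysis will concentrate.
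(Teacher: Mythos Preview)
Your forward direction has a genuine gap. You assert that the hypothesis $\varphi^{-1}(\mathcal{O}_{l,p^{l}})=\mathcal{O}_{k,p^{k}}$ places $\varphi$ in ``the monomorphism case isolated in Proposition \ref{prp1}'', and then invoke Lemma \ref{lm1} coordinatewise. But Lemma \ref{lm1} is a statement about \emph{group} homomorphisms (Definition \ref{df1}), while the witnessing $\varphi$ in Definition \ref{df2} is only a \emph{graph} homomorphism. The Remark immediately following Proposition \ref{prp1} gives an explicit graph homomorphism in which $\mathcal{O}_{k,p^{k}}\xhookrightarrow{}\mathcal{O}_{l,p^{l}}$, all other orbits are sent to nonzero orbits (so your kernel hypothesis holds), and yet the map is \emph{not} a group homomorphism: distinct elements of $\mathcal{O}_{k,1}$ land in different target orbits $\mathcal{O}_{l,1}$ and $\mathcal{O}_{l,p}$. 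For such a $\varphi$ there is no generator being sent to an element of order $p^{k}$, and Lemma \ref{lm1} simply does not apply. Your coordinatewise reduction therefore does not go through.

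The paper's proof confronts exactly this issue. It performs a case analysis on the parity of $k$ and $l$, decomposes the source and target graphs into their independent parts $X,X'$ and complete parts $Y,Y'$, and then checks the inequalities $r\leq r'$, $s\leq s'$ separately for the two regimes: first when $\tau$ is a group homomorphism (where the orbit mapping is determined by $\tau(1)$), and second when $\tau$ is a graph homomorphism that is not a group homomorphism (where elements of a single source orbit may spread across several target orbits). Only after this exhaustive verification does the paper invoke Lemma \ref{lm1}, and there it is used for the contrapositive direction together with the hypothesis to rule out $r>r'$, $s>s'$. Your reverse-direction plan (an explicit monotone assignment of orbit types) is a reasonable alternative to the paper's contradiction argument, but the case analysis in the forward direction cannot be bypassed by the shortcut you propose.
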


\begin{proof}
In setting of the symmetric group action on finite abelain $p$-groups of rank one, let $\mathcal{O}_{k,p^{r}}$,  $\mathcal{O}_{k,p^s}$ be orbits represented by elements $p^r$ and $p^s$ of the source group and $\mathcal{O}_{l,p^{r'}}$,  $\mathcal{O}_{l,p^{s'}}$ be orbits represented by elements $p^{r'}$ and $p^{s'}$ of the target group, where $0\leq r, ~s\leq k-1$ and $0\leq r',~ s'\leq l-1$. We consider the cases hereunder.

\textbf{Case I:} $k = l = 2t$, $t\in\mathbb{Z}_{>0}$. Then the independent and complete parts of the graph realised by a source group is $X = \dot\bigcup_{i=0}^{t-1}\mathcal{O}_{k, p^i}$ and $Y = \dot\bigcup_{i=0}^{t-1}\mathcal{O}_{k, p^{t+j}}$, where each element of both $X$ and $Y$ are connected to $\mathcal{O}_{k,p^{k}} =\{0\}$. Similarly, $X' = \dot\bigcup_{i=0}^{t-1}\mathcal{O}_{l, p^i}$ and $Y' = \dot\bigcup_{j=0}^{t-1}\mathcal{O}_{l, p^{t+j}}$ represents the independent and complete parts of the graph realized by a target group, where each element of both $X'$ and $Y'$ are connected to $\mathcal{O}_{l,p^{l}} =\{0\}$.

Let $x\in X$. If $x\in \mathcal{O}_{k, 1}$, then as discussed above, $x$ is adjacent to $\mathcal{O}_{k, p^k}$ only. On the other hand, if $x\in \mathcal{O}_{k, p^i}$ for $1\leq i \leq t-1$, then $x$ is adjacent to all elements of the set $\dot\bigcup_{n=i}^{0}\mathcal{O}_{k, p^{k-n}} \subset Y$. Moreover, if $x'\in \mathcal{O}_{l, 1}$, then $x'$ is adjacent to $\mathcal{O}_{l, p^l}$ whereas if $x'\in \mathcal{O}_{l, p^j}$ for $1\leq j \leq t-1$, then $x'$ is adjacent to all elements of the set $\dot\bigcup_{m=j}^{0}\mathcal{O}_{l, p^{l-m}} \subset Y'$. Under any given graph homomorphism $\tau$, the images of relations in $X \times \mathcal{O}_{k, p^k}$, $X \times Y$ and $Y \times \mathcal{O}_{k, p^k}$ are in $X' \times \mathcal{O}_{l, p^l}$, $X' \times Y'$ and $Y' \times \mathcal{O}_{l, p^l}$. Let $(a, b) \in X \times \mathcal{O}_{k, p^k} \bigcup X \times Y \bigcup Y \times \mathcal{O}_{k, p^k}$. Suppose $(a, b)$ degenerates to some $(a', b') \in X' \times \mathcal{O}_{l, p^l} \bigcup X' \times Y' \bigcup Y' \times \mathcal{O}_{l, p^l}$ . If $\tau$ is group homomorphism such that $\tau(1) \in \mathcal{O}_{l, 1}$, then 
\begin{center}
$\mathcal{O}_{k,1} \times \mathcal{O}_{k,p^{k}} \xhookrightarrow{1-1} \mathcal{O}_{l,1}\times\mathcal{O}_{l,p^{l}}$, 

$\mathcal{O}_{k,p} \times \mathcal{O}_{k,p^{k}} \xhookrightarrow{1-1} \mathcal{O}_{l,p}\times\mathcal{O}_{l,p^{l}}$,

$\mathcal{O}_{k,p} \times \mathcal{O}_{k,p^{k-1}} \xhookrightarrow{1-1} \mathcal{O}_{l,p}\times\mathcal{O}_{l,p^{l-1}}$, 

\hskip .1cm \vdots
\end{center}

If $\tau(1)\in \mathcal{O}_{l, p}$, then

\begin{center}
$\mathcal{O}_{k,1} \times \mathcal{O}_{k,p^{k}} \twoheadrightarrow  \mathcal{O}_{l,p} \times \mathcal{O}_{l,p^{l}} $,

$\mathcal{O}_{k,p} \times \mathcal{O}_{k,p^{k}} \twoheadrightarrow  \mathcal{O}_{l,p^{2}} \times \mathcal{O}_{l,p^{l}} $,

$\mathcal{O}_{k,p} \times \mathcal{O}_{k,p^{k-1}} \twoheadrightarrow  \mathcal{O}_{l,p^{2}} \times \mathcal{O}_{l,p^{l-1}}$,

\hskip .1cm \vdots
\end{center}

If $\tau(1)$ lies in any other orbit of $X'\bigcup Y'$, then as above we have the mapping of edges to edges. Thus for any group homomorphism which maps $(p^ru, p^su) \in \mathcal{O}_{k,p^{r}} \times \mathcal{O}_{k,p^s}$ to $(p^{r'}u, p^{s'}u) \in \mathcal{O}_{l,p^{r'}} \times \mathcal{O}_{l,p^{s'}}$, the relations $r\leq r'$ and $s\leq s'$ are verified.

Now, suppose $\tau$ is not a group homomorphism but a graph homomorphism. Assume without loss of generality that under $\tau$, $A \times \mathcal{O}_{k,p^{k}} \xhookrightarrow{1-1} A' \times \mathcal{O}_{l,p^{l}}$, where $A \subset \mathcal{O}_{k,1} \subset X$ and $A' \subset \mathcal{O}_{l,1} \subset X'$ are proper subsets of $X$ and $X'$. Moreover,
\begin{center}
$\mathcal{O}_{k,1}\setminus A \times \mathcal{O}_{k,p^{k}} \bigcup \mathcal{O}_{k,p} \times \mathcal{O}_{k,p^{k}} \bigcup \mathcal{O}_{k,p} \times \mathcal{O}_{k,p^{k-1}} \twoheadrightarrow \mathcal{O}_{l,p} \times \mathcal{O}_{l,p^{l-1}} \bigcup \mathcal{O}_{l,p} \times \mathcal{O}_{l,p^{l}}$,

$\mathcal{O}_{k,p^2} \times \mathcal{O}_{k,p^{k}} \xhookrightarrow{1-1} \mathcal{O}_{l,p^2} \times \mathcal{O}_{l,p^{l}}$,

$\mathcal{O}_{k,p^2} \times \mathcal{O}_{k,p^{k-1}} \xhookrightarrow{1-1} \mathcal{O}_{l,p^2} \times \mathcal{O}_{l,p^{l-1}}$,

$\mathcal{O}_{k,p^2} \times \mathcal{O}_{k,p^{k-2}} \xhookrightarrow{1-1} \mathcal{O}_{l,p^2} \times \mathcal{O}_{l,p^{l-2}}$,

\hskip .1cm \vdots

\end{center}
Thus, for $\tau$, we observe that the relations $r\leq r'$ and $s\leq s'$ hold. Similarly these relations can be verified for other graph homomorphisms.

Suppose to the contrary that $r > r'$ and $s > s'$. Then $(a, b)$ does not degenerates to  $(a', b')$, since by Lemma \eqref{lm1}, $a$ and $b$ degenerates to $a'$ and $b'$ if and only if $r\leq r'$ and $s\leq s'$, therefore, a contradiction. Further, if under any graph homomorphism the elements of orbits $\mathcal{O}_{k,p^r} \times \mathcal{O}_{k,p^{s}}$ are mapped to elements of $\mathcal{O}_{l,p^{r'}} \times \mathcal{O}_{l,p^{s'}}$, then it follows that for some $1 \leq s \leq k-1$, $\mathcal{O}_{k,p^{s}}$ is mapped to $\mathcal{O}_{l,p^{l}}$, again a contradiction.

\textbf{Case II:} $k = l = 2t+1$, $t\in\mathbb{Z}_{>0}$. The independent and complete parts of the graph realised by  source and target groups are $X = \dot\bigcup_{i=0}^{t}\mathcal{O}_{k, p^i}$, $Y = \dot\bigcup_{j=1}^{t+1}\mathcal{O}_{l, p^{t+j}}$ and $X' = \dot\bigcup_{i=0}^{t}\mathcal{O}_{l, p^i}$, $Y' = \dot\bigcup_{j=0}^{t+1}\mathcal{O}_{l, p^{t+j}}$. Rest of the proof for this case follows by the same argument which we discussed above for the even case.

Finally, if we consider the cases $(k, l) = (2t, 2t+1)$ or $(k, l) = (2t+1, 2t)$, then these cases can be handled in the same manner as above.
\end{proof}

\begin{figure}[H]
\begin{center}
 \includegraphics[scale=.360]{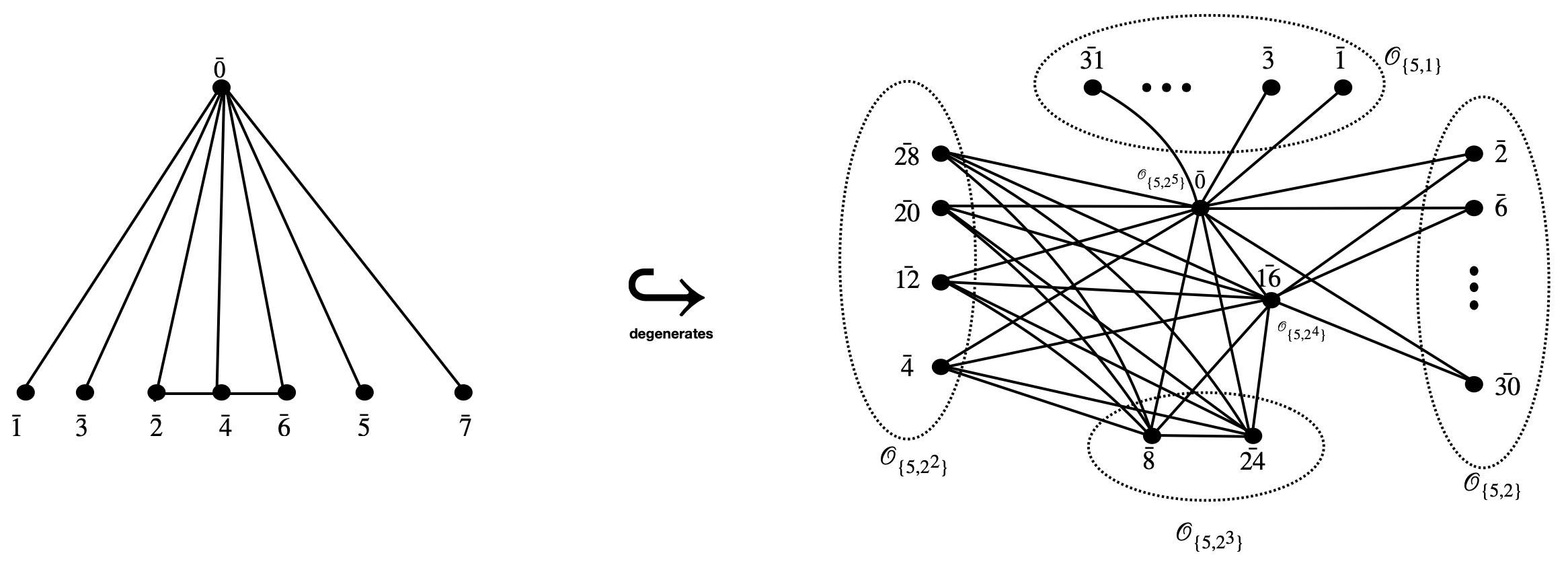}
\end{center}
\caption{Pictorial sketch of degeneration}
\label{2}
\end{figure}

Note that in Figure \eqref{2}, the graph on the left hand side is the graph realized by $\Z/2^3\Z$ and the graph on the right hand side is realized by $\Z/2^5\Z$.
 
\section{\bf{Partial orders on $\mathcal{T}_{p_1 \cdots p_n}$}}
In this section, we study some relations on the set $\mathcal{T}_{p_1 \cdots p_n}$ of all graphs realized by finite abelian $p_r$-groups of rank $1$, where each $p_r$, $1\leq r\leq n$, is a prime number. We discuss equivalent forms of the partial order "degeneration" on $\mathcal{T}_{p_1 \cdots p_n}$ and obtain a locally finite distributive lattice of finite abelian groups.

Threshold graphs play an essential role in graph theory as well as in several applied areas which include psychology and computer science \cite{MP}. These graphs were introduced by Chv\'{a}tal and Hammer \cite{CH}  and Henderson and Zalcstein \cite{HZ}.

A vertex in a graph $\Gamma$ is called \textit{dominating} if it is adjacent to every other vertex of $\Gamma$. A graph $\Gamma$ is called a \textit{threshold graph} if it is obtained by the following procedure.

Start with $K_1$, a single vertex, and use any of the following steps, in any order, an arbitrary number of times.

(i) Add an isolated vertex.

(ii) Add a dominating vertex, that is, add a new vertex and make it adjacent to each existing vertex.

It is always interesting to determine the classes of threshold graphs, since we may represent a threshold graph on $n$ vertices using a binary code $(b_1, b_2, \cdots, b_n)$, where $b_i = 0$ if vertex $v_i$ is being added as an isolated vertex and $b_i =  1$ if $v_i$ is being added as a dominating vertex. Furthermore, using the concept of creation sequences we establish the nullity, multiplicity of some non-zero eigenvalues and the Laplacian eigenvalues of a threshold graph. The Laplacian eigenvalues of $\Gamma$ are the eigenvalues of a matrix $D(\Gamma) - A(\Gamma)$, where  $D(\Gamma)$ is the diagonal matrix of vertex degrees and $A(\Gamma)$ is the familiar $(0, 1)$ adjacency matrix of $\Gamma$.

The authors in \cite{ER} confirmed that the graph realised by a finite abelian $p$-group of rank $1$ is a threshold graph. In fact, they proved the following intriguing result for a finite abelain $p$-groups of rank $1$.

\begin{thm}\cite{ER}.\label{thm2}
If $G$ is a finite abelian $p$-group of rank $1$, then $\Gamma(G)$ is a threshold graph.
\end{thm}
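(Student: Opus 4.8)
The plan is to prove that $\Gamma(G)$ is a threshold graph for $G = \Z/p^n\Z$ by producing an explicit creation sequence, using the adjacency rule established earlier in the excerpt. Recall that the vertices of $\Gamma(G)$ partition into the orbits $\mathcal{O}_{n,p^0}, \mathcal{O}_{n,p^1}, \dots, \mathcal{O}_{n,p^n}$, and two vertices $u\in\mathcal{O}_{n,p^i}$, $v\in\mathcal{O}_{n,p^j}$ are adjacent if and only if $i+j\geq n$. The key structural observation is that adjacency depends only on the orbit indices $i,j$, so vertices in the same orbit are \emph{twins} (they have identical neighborhoods up to themselves), and orbit indices are totally ordered by the strength of their adjacency: a larger index $i$ yields a larger set of partners. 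I would exploit exactly this monotonicity to build the graph one vertex at a time.

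First I would set up the ordering in which vertices are added. The natural choice is to add vertices in order of \emph{increasing} orbit index when building up, or equivalently to peel off vertices from the extremes. Concretely, I would argue by induction on the number of vertices, using the standard characterization that a graph is threshold if and only if at every stage it has either an isolated vertex or a dominating vertex that can be removed. A vertex in $\mathcal{O}_{n,p^n}=\{0\}$ satisfies $i=n$, so $i+j\ge n$ for every $j\ge 0$; hence $0$ is adjacent to all other vertices and is a \textbf{dominating} vertex. At the other extreme, a vertex in $\mathcal{O}_{n,p^0}=\mathcal{O}_{n,1}$ has $i=0$, so it is adjacent to $v\in\mathcal{O}_{n,p^j}$ only when $j\ge n$, i.e.\ only to $0$. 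After $0$ is removed, such a vertex becomes \textbf{isolated}. This gives the removal step.

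The main step is to verify that this removal process continues consistently, so I would formalize the induction. After deleting the dominating vertex $0$ and all isolated vertices of $\mathcal{O}_{n,1}$, the remaining graph is precisely the group-annihilator graph on the orbits $\mathcal{O}_{n,p^1},\dots,\mathcal{O}_{n,p^{n-1}}$ with the \emph{same} adjacency rule $i+j\ge n$. The crucial point to check is that this induced subgraph again has a dominating vertex or an isolated one: the orbit $\mathcal{O}_{n,p^{n-1}}$ now plays the role of the top (its vertices are adjacent to everything with index $\ge 1$, hence to all surviving vertices), and $\mathcal{O}_{n,p^1}$ plays the role of the bottom. I would make this precise by showing the induced structure is isomorphic in adjacency pattern to a smaller instance, allowing the induction to close. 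Equivalently, and perhaps more cleanly, I would directly exhibit a vertex ordering $v_1,\dots,v_N$ (interleaving the low-index and high-index orbits appropriately) together with a binary creation code $(b_1,\dots,b_N)$, where high-index orbit vertices receive code $1$ (dominating) and low-index orbit vertices receive code $0$ (isolated), and then verify that the adjacencies produced by this creation sequence coincide exactly with the rule $i+j\ge n$.

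I expect the main obstacle to be bookkeeping rather than conceptual difficulty: one must handle the middle orbit carefully when $n$ is even (where the threshold condition $i+j\ge n$ can hold for two vertices in the same middle orbit $\mathcal{O}_{n,p^{n/2}}$, forcing those vertices to be mutually adjacent) versus odd $n$ (where no single orbit is self-adjacent), and one must confirm that twins within an orbit are consistently assigned the same code and inserted consecutively without breaking the threshold property. The cleanest route is to prove the contrapositive criterion: a graph is threshold if and only if it contains no induced subgraph isomorphic to $P_4$, $C_4$, or $2K_2$. I would verify directly that no such obstruction can arise, since any four vertices lie in orbits with indices $i_1\le i_2\le i_3\le i_4$, and the monotone adjacency rule $i+j\ge n$ immediately forces a nested neighborhood structure incompatible with all three forbidden configurations. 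This forbidden-subgraph approach sidesteps the interleaving bookkeeping entirely and is the argument I would ultimately present.
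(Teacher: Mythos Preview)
The paper does not actually prove this theorem; it is quoted verbatim from \cite{ER} and stated without argument, so there is no proof in the present paper to compare against. That said, your proposal is mathematically sound, and in fact both strategies you outline are viable: the peeling argument (remove the dominating vertex $0\in\mathcal{O}_{n,p^n}$, then the now-isolated vertices of $\mathcal{O}_{n,1}$, and iterate on the remaining orbits $\mathcal{O}_{n,p},\dots,\mathcal{O}_{n,p^{n-1}}$ with the same rule $i+j\ge n$) works cleanly, and the forbidden-subgraph route via the nested-neighborhood property also goes through since $i\le i'$ forces $N(u)\subseteq N(u')\cup\{u'\}$ for $u\in\mathcal{O}_{n,p^i}$, $u'\in\mathcal{O}_{n,p^{i'}}$.

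For what it is worth, the introduction of the present paper indicates that the authors of \cite{ER} ``investigated the concept of creation sequences in $\Gamma(G)$,'' so the original argument almost certainly follows your first approach (an explicit binary creation code built from the orbit stratification) rather than the $\{P_4,C_4,2K_2\}$-free characterization. Your concern about the middle orbit when $n$ is even is legitimate bookkeeping but not a genuine obstacle: vertices of $\mathcal{O}_{n,p^{n/2}}$ are mutually adjacent (since $n/2+n/2\ge n$), so they all receive code $1$ and are added consecutively as dominating vertices, which is perfectly consistent with the threshold construction.
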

Let $p_1 < p_2 < \cdots < p_n$ be a sequence of primes and let $\lambda_{i} = (\lambda_{i,1}, \lambda_{i, 2}, \cdots, \lambda_{i, n})$ be sequence of partitions of positive integers, where $1 \leq i \leq n$. For each prime $p_t$, where $1 \leq t \leq n$,  the sequences of finite abelian $p_t$-groups with respect to partitions $\lambda_{i,1}, \lambda_{i, 2}, \cdots, \lambda_{i, n}$ are listed as follows,
\begin{center}
$G_{\lambda_1, p_1} = \mathbb{Z}/p_{1}^{\lambda_{1, 1}}\mathbb{Z} \oplus \mathbb{Z}/p_{1}^{\lambda_{1, 2}}\mathbb{Z} \oplus \cdots \oplus \mathbb{Z}/p_{1}^{\lambda_{1,n}}\mathbb{Z}$,

$G_{\lambda_2, p_2} = \mathbb{Z}/p_{2}^{\lambda_{2, 1}}\mathbb{Z} \oplus \mathbb{Z}/p_{2}^{\lambda_{2, 2}}\mathbb{Z} \oplus \cdots \oplus \mathbb{Z}/p_{2}^{\lambda_{2,n}}\mathbb{Z}$,

\hskip .1cm \vdots

\end{center}

Fix a prime $p_r$, where $1\leq r \leq n$. Then for each distinct power $\lambda_{i, j}$, $1 \leq i, j \leq n$, it follows from Theorem \eqref{thm2}, that members of the sequence of graphs realised by a sequence of finite abelian $p_r$-groups of rank $1$ are threshold graphs. The sets of orbits of symmetric group action on sequence of finite abelian $p_r$-groups $\mathbb{Z}/p_{r}^{\lambda_{r, 1}}\mathbb{Z}, \mathbb{Z}/p_{r}^{\lambda_{r, 2}}\mathbb{Z}, \cdots, \mathbb{Z}/p_{r}^{\lambda_{r, n}}\mathbb{Z}$ of rank $1$ are: 
\begin{center}
$\{\mathcal{O}_{r, 1}, \mathcal{O}_{r, p_r^{1}}\}$,

 $\{\mathcal{O}_{r, 1}, \mathcal{O}_{r, p_r^{1}}, \mathcal{O}_{r, p_r^{2}}\}$,
 
$\{\mathcal{O}_{r, 1}, \mathcal{O}_{r, p_{r}^{1}}, \mathcal{O}_{r, p_r^{2}}, \mathcal{O}_{r, p_r^{3}}\}$,

\hskip .1cm \vdots
\end{center}
Note that, $\lambda_{r, 1} = 1, \lambda_{r, 2} = 2, \lambda_{r, 3} = 3, \cdots$, in the above sequence of finite abelian$p_r$-groups. 

Thus for each prime $p_r$ and positive integer $\lambda_{i, j}$, we have sequences of threshold graphs realised by sequences of abelian $p_r$-groups. 

The \textit{degree sequence} of a graph $\Gamma$ is given by $\pi(\Gamma) = (d_1, d_2, \cdots, d_n)$, which is the non-increasing sequence of non-zero degrees of vertices of $\Gamma$. 

For a graph $\Gamma$ of order $n$ and size $m$, let $d = [d_1, d_2, \cdots, d_n]$ be a sequence of non-negative integers arranged in non-increasing order, which we refer to as a partition of $2m$. Define the transpose of the partition as $d^* = [d_1^{*}, d_2^{*}, \cdots, d_r^{*}]$, where $d_j^{*} = |\{d_i : d_i \geq j\}|$, $j = 1, 2,\cdots, r$. Therefore $d_j^{*}$ is the number of $d_i$'s that are greater than equal to $j$. Recall from \cite{RB} that a sequence $d^*$ is called the conjugate sequence of $d$. The another interpretation of a conjugate sequence is the \textit{Ferrer's diagram (or Young diagram)} denoted by $Y(d)$ corresponding to $d_1, d_2, \cdots, d_n$ consists of $n$ left justified rows of boxes, where the $i^{th}$ row consists of $d_i$ boxes (blocks), $i = 1, 2, \cdots, n$. Note that $d_i^{*}$ is the number of boxes in the $i^{th}$ column of the Young diagram with $i = 1, 2, \cdots, r$. An immediate consequence of this observation is that if $d^*$ is the conjugate sequence of $d$, then,
\begin{equation*}
\sum\limits_{i = 1}^{n} d_i = \sum\limits_{i = 1}^{r} d_i^{*}
\end{equation*}

If $d$ represents the degree sequence of a graph, then the number of boxes in the $i^{th}$ row of the Young diagram is the degree of vertex $i$, while the number of boxes in the $i^{th}$ row of the Young diagram of the transpose is the number of vertices with degree at least $i$. The trace of a Young diagram $tr(Y(d))$ is $tr(Y(d)) = |\{i : d_i\geq i\}| = tr(Y(d^{*}))$, which is the length of "diagonal" of the Young diagram for $d$ (or $d^{*}$).

The degree sequence is a graph invariant, so two isomorphic graphs have the same degree sequence. In general, the degree sequence does not uniquely determine a graph, that is, two non-isomorphic
graphs can have the same degree sequence. However, for threshold graphs, we have the following result.

\begin{prop}[\cite{RMB}]\label{prp2} Let $\Gamma_1$ and $\Gamma_2$ be two threshold graphs and let $\pi_1(\Gamma_1)$ and $\pi_{2}(\Gamma_2)$ be  degree sequences of $\Gamma_1$ and $\Gamma_2$ respectively. If $\pi_1(\Gamma_1) = \pi_{2}(\Gamma_2)$, then $\Gamma_1 \cong \Gamma_2$.
\end{prop}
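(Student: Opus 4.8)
The plan is to argue by induction on the number $n$ of vertices (which I take to be common to $\Gamma_1$ and $\Gamma_2$, so that equality of the non-zero degree sequences forces equality of the number of isolated vertices as well). The engine of the induction is a single structural fact about the defining procedure: \emph{every threshold graph on $n \ge 2$ vertices has a vertex that is either isolated (degree $0$) or dominating (degree $n-1$).} Indeed, reading the creation sequence $(b_1,\dots,b_n)$ backwards, the last vertex added is never touched by a later step, so if $b_n = 0$ it remains isolated and if $b_n = 1$ it remains dominating in the final graph. I will also use that threshold graphs are closed under vertex deletion; this is immediate from the creation sequence (deleting a vertex and restricting the code still produces a valid creation sequence), or from the characterisation of threshold graphs as the $\{P_4, C_4, 2K_2\}$-free graphs.

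First I would record a compatibility observation: for $n \ge 2$ no threshold graph can contain a vertex of degree $0$ and a vertex of degree $n-1$ simultaneously, since a dominating vertex is adjacent to every other vertex. Hence the degree sequence alone dictates which of two peeling moves to perform, and because $\pi_1(\Gamma_1) = \pi_2(\Gamma_2)$ the \emph{same} move is prescribed for both graphs.

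In the first case the minimum degree is $0$. I delete one isolated vertex from each of $\Gamma_1, \Gamma_2$; the results $\Gamma_1', \Gamma_2'$ are threshold graphs on $n-1$ vertices, and deleting an isolated vertex changes no other degree, so $\Gamma_1'$ and $\Gamma_2'$ share a common degree sequence (namely $\pi$ with one zero removed). By the inductive hypothesis $\Gamma_1' \cong \Gamma_2'$, and re-attaching the isolated vertex extends this isomorphism to $\Gamma_1 \cong \Gamma_2$. In the second case the maximum degree is $n-1$. I delete one dominating vertex from each graph; the results are again threshold on $n-1$ vertices, but now every surviving vertex loses exactly one from its degree, so the two reduced degree sequences again coincide (each being $\pi$ with one entry $n-1$ removed and every remaining entry decreased by $1$). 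Induction gives $\Gamma_1' \cong \Gamma_2'$, and re-attaching the dominating vertex yields $\Gamma_1 \cong \Gamma_2$. The base case $n = 1$ is trivial, as both graphs are $K_1$. Notice that the sequence of moves performed is precisely the creation sequence read in reverse, so this argument simultaneously shows that the degree sequence determines the code, and hence the graph.

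The step I expect to be the main obstacle is the bookkeeping that keeps the two reductions synchronised: one must verify that the prescribed move is genuinely forced by $\pi$ (handled by the compatibility observation above) and that after deletion the two reduced degree sequences are literally equal, so that the inductive hypothesis applies. A secondary point to watch is the paper's convention that $\pi(\Gamma)$ lists only the non-zero degrees; to make the isolated-vertex case legitimate one needs $\Gamma_1$ and $\Gamma_2$ to have the same order, so that the hidden isolated vertices are matched in number. With the order fixed, the non-zero degree sequence determines the full degree sequence and the induction closes.
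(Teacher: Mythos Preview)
Your argument is correct and is in fact the standard proof of this well-known result: peel off an isolated or dominating vertex (exactly one of which must exist in any threshold graph on at least two vertices), observe that the reduced degree sequences still coincide, and induct. The bookkeeping you flag---that the same move is forced on both graphs, and that the reduced sequences remain equal---is handled cleanly.

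As for comparison with the paper: there is nothing to compare. The paper does not prove Proposition~\ref{prp2}; it simply quotes it from Merris's textbook \cite{RMB} and moves on. Your write-up therefore supplies strictly more than the paper does on this point. If anything, the argument you give is essentially the one found in standard references (including Merris), so you are reconstructing the cited source rather than diverging from it.
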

The Laplacian spectrum of threshold graphs $\Gamma$, which we denote by $\ell-spec(\Gamma)$, have been studied in \cite{HK, RM}. In \cite{HK}, the formulas for the Laplacian spectrum, the Laplacian polynomial, and the number of spanning trees of a threshold graph are given. It is shown that the degree sequence of a threshold graph and the sequence of eigenvalues of its Laplacian matrix are \enquote {almost the same} and on this basis, formulas are given to express the Laplacian polynomial and the number of spanning trees of a threshold graph in terms of its degree sequence. 

The following is the fascinating result regarding the Laplacian eigenvalues of the graph realized by a finite abelian $p$-group of rank $1$.

\begin{thm}\cite{ER}. \label{thm3}
Let $\Gamma(G)$ be the graph realized by a finite abelian $p$-group of the type $G = \mathbb{Z}/p^{k}\mathbb{Z}$. Then the representatives $0, 1, p, p^2, \cdots, p^{k - 1}$ (with multiplicities) of orbits $\{\mathcal{O}_{k, p^{k}}\} \cup \{\mathcal{O}_{k, p^{i}} : 0\leq i \leq k - 1\}$ of symmetric action on $G$ are the Laplacian eigenvalues of $\Gamma(G)$, that is, $\ell-spec(\Gamma(G)) = \{0, 1, p, p^2, \cdots, p^{k - 1}, p^k \}$.
\end{thm}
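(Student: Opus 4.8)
The plan is to exploit the fact, established in Theorem \ref{thm2}, that $\Gamma(G)$ is a threshold graph, and to invoke the result of \cite{HK} that the Laplacian spectrum of a threshold graph coincides with the conjugate (transpose) of its degree sequence, i.e. with the column lengths $d_j^{*}$ of its Young diagram $Y(d)$. Thus the whole problem reduces to computing the degree sequence of $\Gamma(G)$ and reading off the distinct values of its conjugate.

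First I would record the two ingredients already available from the preliminaries: the vertices of $\Gamma(G)$ split into the orbits $\mathcal{O}_{k,p^i}$ with $|\mathcal{O}_{k,p^i}| = \phi(p^k)/p^i$ for $0 \le i \le k-1$ and $|\mathcal{O}_{k,p^k}| = 1$, and two vertices $u \in \mathcal{O}_{k,p^i}$, $v \in \mathcal{O}_{k,p^j}$ are adjacent precisely when $i + j \ge k$. The key computation is the telescoping identity
\[
S(m) := \sum_{i=m}^{k} |\mathcal{O}_{k,p^i}| = 1 + (p-1)\sum_{t=0}^{k-1-m} p^{t} = p^{k-m},
\]
so that the number of vertices lying in orbits of index at least $m$ is exactly the orbit representative $p^{k-m}$. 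I would use this to compute the degree of a vertex $u \in \mathcal{O}_{k,p^i}$: its neighbours are the vertices of the orbits $\mathcal{O}_{k,p^j}$ with $j \ge k-i$, whose total number is $S(k-i) = p^{i}$, from which one subtracts $1$ exactly when $2i \ge k$ (the case in which $u$ is joined to the remaining members of its own orbit). Hence $\deg u = p^{i}$ if $2i < k$ and $\deg u = p^{i}-1$ if $2i \ge k$; in particular the degree is a strictly increasing function of the orbit index $i$, since $p^{h-1} < p^{h}-1$ at the transition index $h = \lceil k/2\rceil$.

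Next I would pass to the conjugate degree sequence. Because the degree is monotone in $i$, the number $d_j^{*}$ of vertices of degree at least $j$ is, for every $j$, a partial sum of the form $S(m_j)$ for a suitable threshold index $m_j$; by the identity above every such value is a power of $p$, so $d_j^{*} \in \{p^{k}, p^{k-1}, \dots, p, 1, 0\}$. Since the degrees strictly increase with $i$, the threshold $m_j$ assumes each value $0,1,\dots,k$ as $j$ ranges over $1,\dots,p^{k}-1$, so every power $p^{k-m}$ is actually attained; padding the conjugate sequence with a single $0$ to the full length $n = |G| = p^{k}$ (which corresponds to the simple Laplacian eigenvalue $0$ of the connected graph $\Gamma(G)$, whose $0$ is a dominating vertex) then shows that the set of Laplacian eigenvalues is exactly $\{0,1,p,\dots,p^{k-1},p^{k}\}$, i.e. the orbit representatives together with $p^{k}$. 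The multiplicities fall out of the same picture: the multiplicity of $p^{k-m}$ equals $\deg(\mathcal{O}_{k,p^{m}}) - \deg(\mathcal{O}_{k,p^{m-1}})$, which telescopes to $p^{k}$ and, away from the central orbit, equals the corresponding orbit size $|\mathcal{O}_{k,p^{k-m}}|$.

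The main obstacle I anticipate is bookkeeping rather than conceptual: one must handle correctly the $-1$ correction in the degree of the self-adjacent orbits ($2i \ge k$) and confirm that this correction perturbs only the multiplicities (by shifting where the conjugate sequence drops) and never the set of attained values, which remains $\{p^{k-m}\}$ purely because each $d_j^{*}$ is forced to be a partial sum $S(m_j) = p^{k-m_j}$. A secondary point to verify carefully is that the total multiplicity telescopes to $p^{k}$ and that exactly one eigenvalue $0$ appears, which is guaranteed by the connectivity of $\Gamma(G)$.
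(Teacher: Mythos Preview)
The paper does not supply a proof of this statement: it is quoted from \cite{ER} and left unproved here, so there is no in-paper argument to compare yours against. Your approach is nonetheless precisely the one the surrounding discussion signals---the paragraph immediately preceding the theorem recalls the Hammer--Kelmans result \cite{HK} that for a threshold graph the Laplacian spectrum and the degree sequence are ``almost the same'' (i.e.\ conjugate partitions), and the worked example $G=\mathbb{Z}/2^{4}\mathbb{Z}$ just after the theorem exhibits exactly the conjugate-partition computation you carry out in general via the telescoping identity $S(m)=p^{k-m}$. So your proof is in the intended spirit and essentially correct.

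One small caveat in your bookkeeping: the assertion that the degree is \emph{strictly} increasing in the orbit index $i$ fails at $p=2$, $k=2$, where $p^{h-1}=1=p^{h}-1$ at $h=\lceil k/2\rceil=1$; there the value $2$ is never attained by any $d_{j}^{*}$ and the Laplacian spectrum of $\Gamma(\mathbb{Z}/4\mathbb{Z})\cong K_{1,3}$ is $\{4,1,1,0\}$, missing $p=2$. Your own multiplicity formula $\deg(\mathcal{O}_{k,p^{m}})-\deg(\mathcal{O}_{k,p^{m-1}})$ correctly returns $0$ in that case, so the anomaly lies in the theorem's phrasing (which must be read as allowing multiplicity zero for this single exceptional pair $(p,k)$) rather than in your method. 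Apart from flagging this boundary case, the argument is sound.
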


\begin{defn}\label{df3}
Let $\pi_1, \pi_2, \cdots, \pi_n \in \mathbb{Z}_{>0}$ and $\pi_1^{\bullet}, \pi_2^{\bullet}, \cdots, \pi_n^{\bullet} \in \mathbb{Z}_{>0}$ be some partitions of $n \in \mathbb{Z}_{>0}$. A sequence (partition) of eigenvalues $\pi = (\pi_1, \pi_2, \cdots, \pi_n)$ of a graph $\Gamma$ is said to be a threshold eigenvalues sequence (partition) if $\pi_{i}  = \pi_{i}^{\bullet} + 1$ for all $i$ with $1 \leq i \leq tr(Y(\pi))$. 
\end{defn}

Just for the convenience we refer the Laplacian eigenvalues as eigenvalues. The sequence of representatives of orbits (or eigenvalues of $\Gamma(\mathbb{Z}/p^{k}\mathbb{Z})$) of a symmetric action on a group $\mathbb{Z}/p^{k}\mathbb{Z}$ obtained in Theorem \eqref{thm3} represents transpose of a young diagram $Y(d)$, where $d$ is the degree sequence of the graph realized by $\mathbb{Z}/p^{k}\mathbb{Z}$.

For a group $G = \mathbb{Z}/2^4\mathbb{Z}$ be a group, the degree sequence $\sigma$ of $\Gamma(G)$ is,
\begin{equation*}
\sigma = \pi^{\bullet} = (15, 7, 3, 3, 2, 2, 2, 2, 1, 1, 1, 1, 1, 1, 1, 1).
\end{equation*}

The conjugate sequence of $\sigma$ is,
\begin{equation*} 
\sigma^{*} = \pi = (2^4, 2^3, 2^2, 2, 2, 2, 2, 1, 1, 1, 1, 1, 1, 1, 1).
\end{equation*}

A partition $\pi$ of eigenvalues of $\Gamma(G)$ is a threshold eigenvalues partition, since $\sum\limits_{i = 1}^{3}\pi_{i} =  \sum\limits_{i = 1}^{3}\pi_{i}^{\bullet} + 1$. Note that $tr(Y(\pi)) = 3$, the three blocks in $Y(\sigma^{*}) = Y(\pi)$ are shown as $t_{11}, t_{22}, t_{33}$ before the darkened column in Figure \eqref{3} below.

\begin{figure}[H]
 \begin{center}
 \includegraphics[scale=.400]{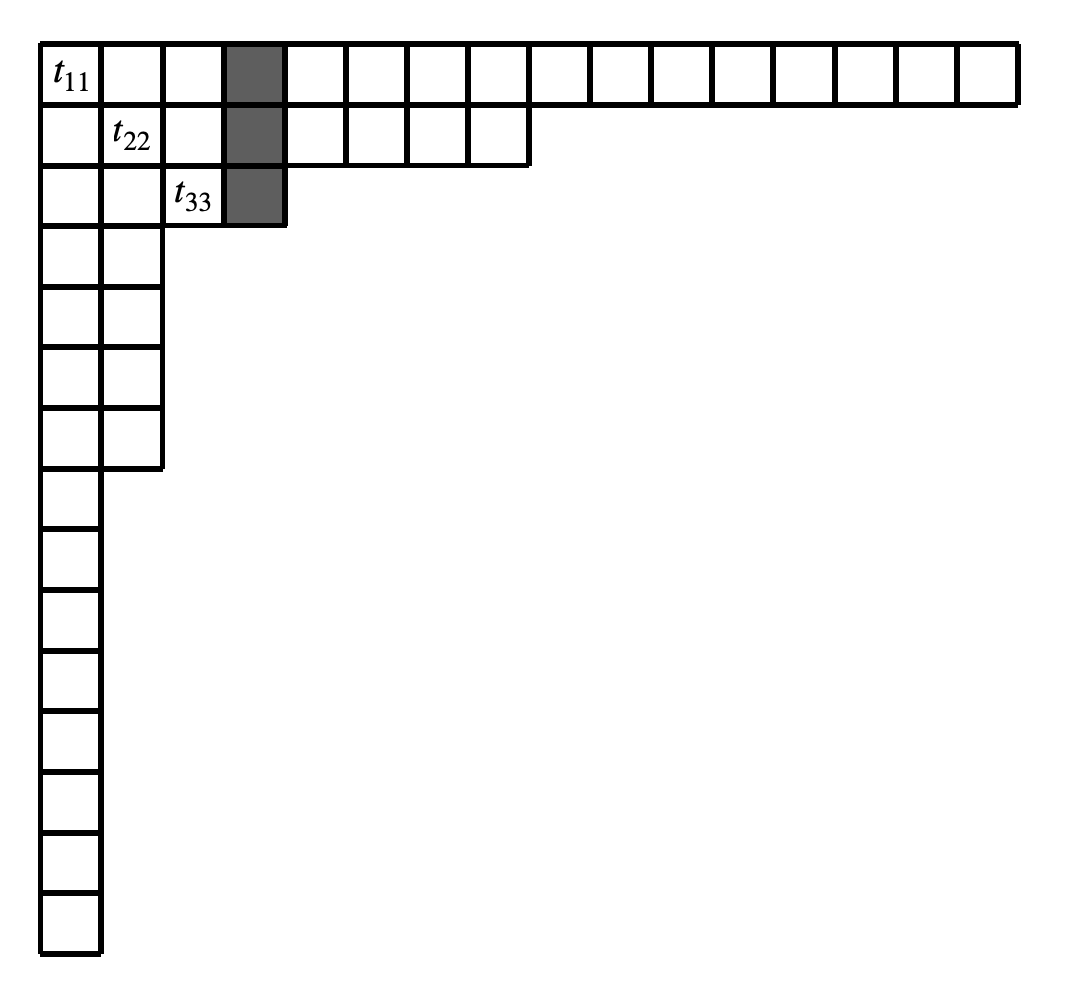}
 \end{center}
\caption{\bf{$Y(\pi)$}}
\label{3}
\end{figure}

Thus from above discussion we assert that a partition $\pi$ of eigenvalues is a threshold eigenvalues partition if and only if $Y(\pi)$ can be decomposed into an $tr(Y(\pi)) \times tr(Y(\pi))$ array of blocks in the upper left-hand corner called the \textit{trace square} in $Y(\pi)$. A column of $tr(Y(\pi))$ blocks placed immediately on the right hand side of trace square, darkened in Figure \eqref{3}, and a piece of blocks on the right hand side of column $tr(Y(\pi)) + 1$ is the transpose of the piece which is below the trace square. 
 
If $a = (a_1, a_2, \cdots, a_r)$ and $b = (b_1, b_2, \cdots, b_s)$ are non-increasing sequences of real numbers. Then $b$ \textit{weakly majorizes} $a$, written as $b \succeq a$, if $r \geq s$,

\begin{equation} 
\sum\limits_{i = 1}^{k}b_i \geq \sum\limits_{i = 1}^{k}a_i, 
\label{eq1}   
\end{equation}
where $1 \leq k \leq s$, and 
\begin{equation} 
\sum\limits_{i = 1}^{r}b_i \geq \sum\limits_{i = 1}^{s}a_i. 
\label{eq2}      
\end{equation}

If $b$ weakly majorizes $a$ and equality holds in \eqref{eq2}, then $b$ \textit{majorizes} $a$, written as $b \succ a$.

We present an example which illustrates that the threshold eigenvalues partition of some graph realized by a finite abelian $p$-group $G_1$ majorizes the degree partition of the graph realized by some other finite abelian $p$-group $G_2$.

Let $G_1 = \mathbb{Z}/2^3\mathbb{Z}$ and $G_1 = \mathbb{Z}/3^2\mathbb{Z}$ be two groups. The degree partitions $\pi_1^{\bullet}$ and $\pi_2$ of graphs $\Gamma(G_1)$ and $\Gamma(G_2)$ are listed below as,
\begin{center}
$\pi_1^{\bullet} = (7, 3, 2, 2, 1, 1, 1, 1),$

$\pi_2 = (8, 2, 2, 1, 1, 1, 1, 1, 1).$
\end{center} 

The partitions $\pi_1^{\bullet}, \pi_2 \in \mathcal{P}(18)$, where $\mathcal{P}(18)$ is the set of all partitions of $18$. The partition $\pi_1 = (8, 4, 2, 1, 1, 1, 1)$ is the threshold eigenvalues partition of $\Gamma(G_1)$. The Young diagrams of partitions $\pi_1$ and $\pi_{2}$ are shown in Figure \eqref{4}. 

\begin{figure}[H]
 \begin{center}
 \includegraphics[scale=.310]{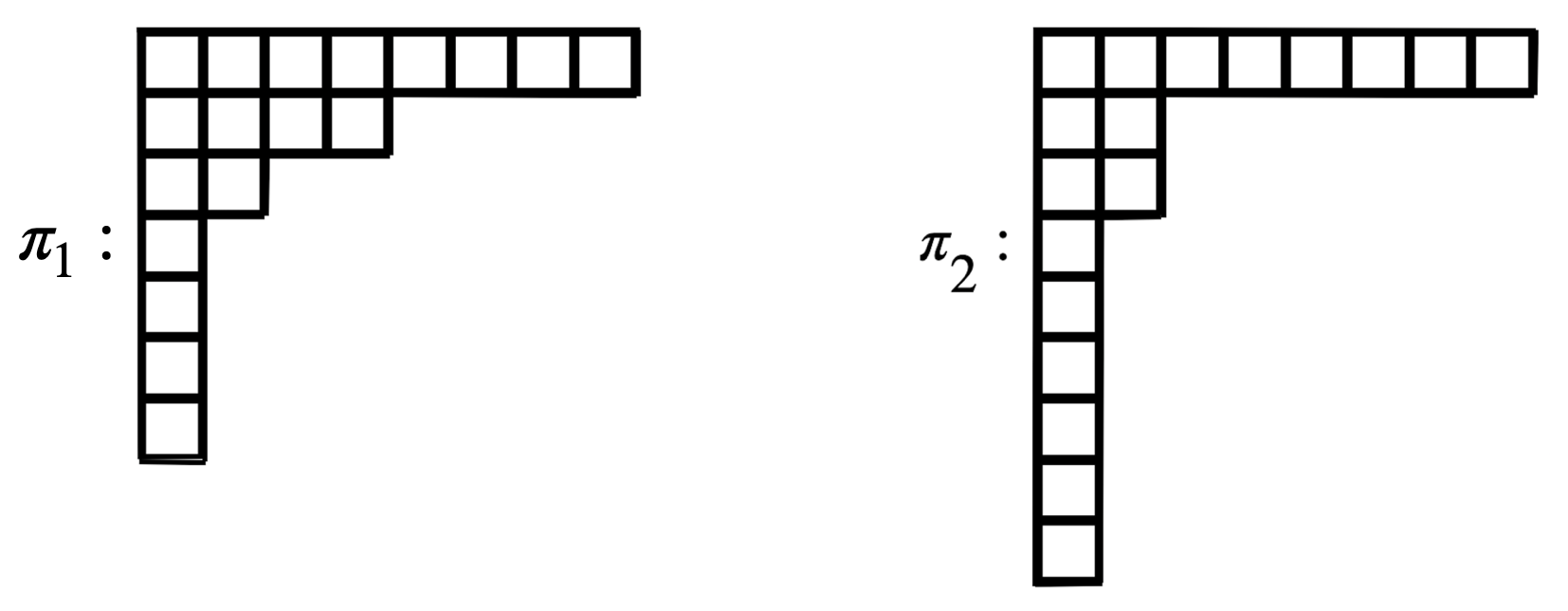}
 \end{center}
\caption{Young diagrams of $\pi_1$ and $\pi_{2}$}
\label{4}
\end{figure}

Let $\pi^{\bullet}$ and $\sigma$ be two degree sequences of graphs realized by finite abelian $p$-groups of rank $1$ such that $\pi^{\bullet}, \sigma \vdash m$, where $m\in \mathbb{Z}_{> 0}$. Then $\pi \succ \sigma$ if and only if $Y(\pi)$ can be obtained from $Y(\sigma)$ by moving blocks of the highest row in $Y(\sigma)$ to lower numbered rows. Thus majorization induces a partial order on sets $\{Y(\pi^{\bullet}) : \pi^{\bullet} ~is ~a ~degree ~sequence ~of ~some ~graph ~re$
\noindent $alized ~by ~a ~p-group ~of ~rank ~1\}$ and $\{Y(\pi^{\bullet}) : \pi^{\bullet} \vdash n, n \in \mathbb{Z}_{>0}\}$.

\begin{cor}
If $\pi, \sigma \in \mathcal{P}(n)$, $n\in \mathbb{Z}_{>0}$, then $\pi \succ \sigma$ if and only if $Y(\pi)$ can be obtained from $Y(\sigma)$ by moving blocks of the highest row in $Y(\sigma)$ to lower numbered rows.
\end{cor}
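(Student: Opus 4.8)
The plan is to recognize this statement as the classical fact that the dominance (majorization) order on $\mathcal{P}(n)$ is generated by single-block raising operations, and to prove the two implications separately via a partial-sum analysis. Throughout I write $\pi_m,\sigma_m$ for the parts (padded by trailing zeros so the two sequences have a common length), and I note that since $\pi,\sigma\in\mathcal{P}(n)$ the total sums automatically agree, so equality in \eqref{eq2} holds and "$\succ$" here is exactly the dominance order.

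For the reverse implication I would first treat a single elementary move. Suppose $Y(\tau)$ is obtained from $Y(\sigma)$ by deleting one block from a row of index $j$ and appending it to a row of index $i$ with $i<j$, the result remaining a Young diagram. Comparing partial sums gives $\sum_{m\le k}\tau_m=\sum_{m\le k}\sigma_m$ for $k<i$ and for $k\ge j$, while $\sum_{m\le k}\tau_m=\sum_{m\le k}\sigma_m+1$ for $i\le k<j$. Hence every partial sum of $\tau$ is at least that of $\sigma$ and the totals coincide, so $\tau\succ\sigma$. Since $\succ$ is transitive, any finite chain of such raising moves carrying $Y(\sigma)$ to $Y(\pi)$ forces $\pi\succ\sigma$.

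For the forward implication, assume $\pi\succ\sigma$ with $\pi\neq\sigma$ and introduce the gap function $g(k)=\sum_{m=1}^{k}(\pi_m-\sigma_m)$. Majorization gives $g(k)\ge 0$ for all $k$, with $g(0)=0$ and $g(k)=0$ for all large $k$, while $g$ is strictly positive somewhere because $\pi\neq\sigma$. I would pick any maximal interval $[a,b]$ of indices on which $g\ge 1$. The crux is an end-point computation showing that $\sigma$ has an addable corner at row $a$ and a removable corner at row $b+1$: from $g(a-1)=0$ and $g(a)\ge 1$ one extracts $\pi_a>\sigma_a$ and $\sigma_{a-1}\ge\pi_{a-1}\ge\pi_a>\sigma_a$ (so a block may be appended to row $a$, trivially so if $a=1$), and from $g(b+1)=0$ and $g(b)\ge 1$ one extracts $\pi_{b+1}<\sigma_{b+1}$ and $\sigma_{b+2}\le\pi_{b+2}\le\pi_{b+1}<\sigma_{b+1}$ (so a block may be removed from row $b+1$). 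Defining $\tau$ by moving one block from row $b+1$ up to row $a$, these inequalities guarantee $\tau$ is again a genuine partition, the move is a raising to a lower-numbered row as required, and $g_\tau(k)=g(k)-1$ on $[a,b]$ with $g_\tau=g$ elsewhere shows $\sigma\prec\tau\preceq\pi$. I would then induct on the nonnegative integer $\sum_k g(k)$, which drops by exactly $b-a+1$ at each step, to conclude that finitely many raising moves transform $Y(\sigma)$ into $Y(\pi)$.

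The only genuinely delicate point is this forward step: showing that a raising move can always be chosen so that $\tau$ stays weakly decreasing \emph{and} stays dominated by $\pi$. Both requirements are met simultaneously by locating a maximal positivity interval $[a,b]$ of $g$ and taking its top row $a$ as the receiving row and the first row past it, $b+1$, as the donating row; the end-point analysis supplies the addable corner, the removable corner, and the inequality $g\ge 1$ on $[a,b]$ that preserves $\tau\preceq\pi$. Everything else is routine bookkeeping with partial sums, and since the argument never uses that $\pi,\sigma$ arise as degree sequences of $p$-group graphs, it applies verbatim to all of $\mathcal{P}(n)$, which is precisely the generalization the corollary records.
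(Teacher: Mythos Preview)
Your argument is correct and complete: you recognize the corollary as the classical characterization of the dominance order on $\mathcal{P}(n)$ as the transitive closure of single-block raising moves, and you prove both directions cleanly via the partial-sum gap function $g(k)=\sum_{m\le k}(\pi_m-\sigma_m)$. The end-point analysis on a maximal positivity interval $[a,b]$ of $g$ is exactly the right device to exhibit simultaneously an addable corner at row $a$, a removable corner at row $b+1$, and the inequality $g\ge 1$ on $[a,b]$ ensuring $\tau\preceq\pi$; the induction on $\sum_k g(k)$ then terminates the process.

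By contrast, the paper does not actually supply a proof here: the corollary is recorded immediately after the paragraph asserting the same fact for degree sequences of $p$-group graphs, and is left as a bare statement extending that observation to all of $\mathcal{P}(n)$. So your contribution is genuinely additive---you are filling in a proof the paper omits. One small remark: the paper's phrasing ``moving blocks of the highest row in $Y(\sigma)$ to lower numbered rows'' is loose (it suggests a single donating row), whereas your proof correctly allows the donor row $b+1$ to vary from step to step; this generality is necessary for the forward implication to go through, and your reading is the only one under which the statement is true.
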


\begin{thm}\label{thm4} Let $\mathcal{T}_{p_1 \cdots p_n}$ be the collection of all graphs realised by all sequences of finite abelian $p_r$-groups, where $1 \leq r \leq n$. If $\pi$ is a threshold eigenvalues partition, then upto isomorphism, there is exactly one finite abelian $p_r$-group $G$ of rank $1$ such that $\ell-spec(\Gamma(G))\setminus \{0\} = \pi$.
\end{thm}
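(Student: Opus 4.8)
The plan is to show that the assignment sending a rank-one $p$-group $G=\mathbb{Z}/p^{k}\mathbb{Z}$ to the partition $\pi_G := \ell\text{-spec}(\Gamma(G))\setminus\{0\}$ is a bijection from such groups onto the threshold eigenvalues partitions occurring in $\mathcal{T}_{p_1\cdots p_n}$, so that a given $\pi$ pins down the pair $(p,k)$ uniquely. The whole argument rests on Theorem \ref{thm3}, which tells us that for $G=\mathbb{Z}/p^{k}\mathbb{Z}$ the distinct Laplacian eigenvalues are exactly $0,1,p,p^{2},\dots,p^{k}$. Hence, after discarding the single eigenvalue $0$, the partition $\pi_G$ has as its distinct parts precisely the geometric progression $p^{0}<p^{1}<\dots<p^{k}$, with multiplicities dictated by the orbit sizes (equivalently, by the conjugate of the degree sequence of $\Gamma(G)$ read through Definition \ref{df3}). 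This structural fact is what I would extract first.

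For existence, given the threshold eigenvalues partition $\pi$ I would read the data of $G$ directly off its parts: the prime $p$ is the second smallest distinct part of $\pi$ (the smallest distinct part is always $1=p^{0}$, coming from the orbit $\mathcal{O}_{k,1}$ of units, which are the leaves of $\Gamma(G)$), while the exponent $k$ is determined by the largest part via $p^{k}=\max\pi$. Setting $G=\mathbb{Z}/p^{k}\mathbb{Z}$, Theorem \ref{thm3} gives $\ell\text{-spec}(\Gamma(G))\setminus\{0\}=\pi$ at the level of distinct eigenvalues; to secure equality of the full partitions (with multiplicities) I would recover the degree sequence $\pi^{\bullet}$ from $\pi$ by inverting the relation $\pi_i=\pi_i^{\bullet}+1$ for $i\le tr(Y(\pi))$ together with conjugation and the trace-square symmetry of Definition \ref{df3}, and then invoke Proposition \ref{prp2}: a threshold graph is determined up to isomorphism by its degree sequence, so the recovered graph is forced to be $\Gamma(\mathbb{Z}/p^{k}\mathbb{Z})$ and the multiplicities agree.

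For uniqueness, suppose $G_1=\mathbb{Z}/p_1^{k_1}\mathbb{Z}$ and $G_2=\mathbb{Z}/p_2^{k_2}\mathbb{Z}$ both satisfy $\ell\text{-spec}(\Gamma(G_i))\setminus\{0\}=\pi$. Equality of the partitions forces equality of the underlying sets of distinct parts, so by Theorem \ref{thm3} one has $\{p_1^{0},\dots,p_1^{k_1}\}=\{p_2^{0},\dots,p_2^{k_2}\}$; comparing the second-smallest elements gives $p_1=p_2=:p$, and comparing the largest elements gives $p^{k_1}=p^{k_2}$, whence $k_1=k_2$ and $G_1\cong G_2$. The main obstacle I anticipate is not this uniqueness step, which is purely arithmetic, but the bookkeeping in existence: one must check that the multiplicities prescribed by $\pi$ are exactly those of $\Gamma(\mathbb{Z}/p^{k}\mathbb{Z})$ and not merely that the distinct eigenvalues match, and for this the clean route is to pass through the degree sequence and Proposition \ref{prp2} rather than computing orbit sizes and eigenvalue multiplicities directly. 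A secondary point to keep honest is the scope of $\pi$: the reconstruction works because $\pi$ is assumed to be a threshold eigenvalues partition arising in $\mathcal{T}_{p_1\cdots p_n}$, so that its distinct parts genuinely form a progression $1,p,\dots,p^{k}$; I would state this hypothesis explicitly so that the step of reading off $p$ and $k$ is justified.
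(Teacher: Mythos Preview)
Your proposal is correct but follows a different line than the paper. The paper gives a brief Young-diagram argument: it fixes a sequence $(\Gamma(\mathbb{Z}/p_r^{\lambda_{r,1}}\mathbb{Z}),\ldots,\Gamma(\mathbb{Z}/p_r^{\lambda_{r,n}}\mathbb{Z}))$ in $\mathcal{T}_{p_1\cdots p_n}$, notes that the Young diagram $Y(\pi)$ attached to $\mathbb{Z}/p_r^{\lambda_{r,r}}\mathbb{Z}$ strictly contains the Young diagram attached to the preceding term $\mathbb{Z}/p_r^{\lambda_{r,r-1}}\mathbb{Z}$ (the latter being obtained from the former by deleting blocks in rows and columns), and then appeals to induction along the sequence to conclude that distinct terms produce distinct threshold eigenvalues partitions.

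Your route is more arithmetic and more explicit: you extract the pair $(p,k)$ directly from the set of distinct parts of $\pi$ using Theorem~\ref{thm3} (second smallest part gives $p$, largest gives $p^{k}$), and you pin down the full partition with multiplicities by passing through the degree sequence and invoking Proposition~\ref{prp2}. This buys you a concrete reconstruction of $G$ from $\pi$ and handles uniqueness across different primes $p_r$ in one stroke, something the paper's containment-within-a-single-chain argument does not address explicitly. Conversely, the paper's approach dovetails more naturally with the ``fits in'' order on Young diagrams that it develops immediately afterward, so its proof is really setting up the poset isomorphism of Corollary~\ref{cr1} rather than isolating the arithmetic content you emphasize.
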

\begin{proof}
Let $\left(\Gamma(\mathbb{Z}/p_{r}^{\lambda_{r, 1}}\mathbb{Z}), \Gamma(\mathbb{Z}/p_{r}^{\lambda_{r, 2}}\mathbb{Z}), \cdots, \Gamma(\mathbb{Z}/p_{r}^{\lambda_{r, n}}\mathbb{Z})\right) \in \mathcal{T}_{p_1 \cdots p_n}$ be a sequence of graphs realized by a sequence of finite abelian $p_r$-groups $\left(\mathbb{Z}/p_{r}^{\lambda_{r, 1}}\mathbb{Z}, \mathbb{Z}/p_{r}^{\lambda_{r, 2}}\mathbb{Z}, \cdots, \mathbb{Z}/p_{r}^{\lambda_{r, n}}\mathbb{Z}\right)$. 

Let $\pi$ be a threshold eigenvalues partition of some graph of the sequence. Without loss of generality let it be the graph realised by a finite abelian $p_r$-group $\mathbb{Z}/p_{r}^{\lambda_{r, r}}\mathbb{Z}$. The partition $\pi$ is represented by Young diagram $Y(\pi)$ and the Young diagram for the abelian $p_r$-group of type $\mathbb{Z}/p_{r}^{\lambda_{r, r-1}}\mathbb{Z}$ can be obtained from $Y(\pi)$ by removing some blocks in rows and columns of $Y(\pi)$. The proof now follows by induction on terms of the sequence of graphs.
\end{proof}

For $1\leq i \leq j \leq n$, let $G$ be a finite abelian $p_i$-group of rank $1$ and $H$ be a finite abelian $p_j$-group of the same rank. Moreover, let $\Gamma(G)$ and $\Gamma(H)$ be two graphs realized by $G$ and $H$. We define a partial order "$\leq$" on $\mathcal{T}_{p_1 \cdots p_n}$. Graphs $\Gamma(G), \Gamma(H) \in \mathcal{T}_{p_1 \cdots p_n}$ are related as $\Gamma(G) \leq  \Gamma (H)$ if and only if $\Gamma(H)$ contains a subraph isomorphic to $\Gamma(G)$, that is if and only if $\Gamma(G)$ can be obtained from $\Gamma(H)$  by "deletion of vertices".

The relation "degeneration" on the set $\mathcal{T}_{p_1 \cdots p_n}$ descends to a partial order on $\mathcal{T}_{p_1 \cdots p_n}$ and two graphs $\Gamma(G)$, $\Gamma(H)$ are related if $\Gamma(G)$ degenerates to $\Gamma(H)$. It is not hard to verify that the partial orders "$\leq$" and "degeneration" are equivalent on $\mathcal{T}_{p_1 \cdots p_n}$, since by "deletion of vertices" in $\Gamma(H)$ we get the homomorphic image of $\Gamma(G)$ in $\Gamma(H)$ and if $\Gamma(G)$ degenerates to $\Gamma(H)$, then $\Gamma(G)$ can be obtained from $\Gamma(H)$ by "deletion of vertices".

Recall that a poset $P$ is locally finite if the interval $[x, z] = \{y \in P : x \leq y \leq z\}$ is finite for all $x, z \in P$. If $x, z \in P$ and $[x, z] = \{x, z\}$, then $z$ \textit{covers} $x$. A Hasse diagram of $P$ is a graph whose vertices are the elements of $P$, whose edges are the cover relations, and such that z is drawn "above" x whenever $x < z$.

A lattice is a poset $P$ in which every pair of elements $x, y \in P$ has a least
upper bound (or join), $x \vee y \in P$, and a greatest lower bound (or meet), $x \wedge y \in P$. Lattice $P$ is distributive if $x \wedge (y \vee z) = (x \wedge y) \vee (x \wedge z)$ and
$x \vee (y \wedge z) = (x \vee y) \wedge (x \vee z)$ for all $x, y, z \in P$.

Let $\mathcal{Y}$ be the set of all threshold eigenvalues partitions of members of $\mathcal{T}_{p_1 \cdots p_n}$. If $\mu, \eta \in \mathcal{Y}$, define $\mu \leq \eta$, if $Y(\mu)$ "fits in" $Y(\eta)$, that is, if $\mu \leq \eta$, then $Y(\eta)$ is overlapped by $Y(\mu)$ or $Y(\mu)$ fits inside $Y(\eta)$.  The set $\mathcal{Y}$ with respect this partial ordering is a locally finite distributive lattice. The unique smallest element of $\mathcal{Y}$ is $\hat{0} = \emptyset$, the empty set.

Recall that the dual of a poset $P$ is the poset $P^{*}$ on the same set as $P$, such that $x \leq y$ in $P^{*}$ if and only if $y \leq x$ in $P$. If $P$ is isomorphic to $P^{*}$, then $P$ is self-dual.

\begin{thm}\label{thm5}
If $\Gamma(G), \Gamma(H) \in \mathcal{T}_{p_1 \cdots p_n}$, then $\Gamma(G)\leq \Gamma(H)$ if and only $Y(\mu)$ "fits in" $Y(\eta)$, where $\mu$ and $\eta$ are threshold eigenvalues partitions of graphs $\Gamma(G)$ and $\Gamma(H)$.
\end{thm}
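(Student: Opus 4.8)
The plan is to prove that the assignment $\Gamma(G)\mapsto\mu$, sending a graph realised by a cyclic $p_r$-group of rank $1$ to its threshold eigenvalues partition, is an order isomorphism from $(\mathcal{T}_{p_1\cdots p_n},\leq)$ onto $\mathcal{Y}$ equipped with the \emph{fits in} order. That this map is a bijection is precisely Theorem \ref{thm4} combined with Proposition \ref{prp2}, so only order preservation in both directions has to be checked. The key bookkeeping device is the relation recorded just after Theorem \ref{thm3}: the threshold eigenvalues partition $\mu$ is the transpose of the degree partition $\mu^{\bullet}$ of $\Gamma(G)$, i.e. $\mu=(\mu^{\bullet})^{*}$. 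Because conjugation of Young diagrams preserves containment (if $\lambda_i\le\nu_i$ for all $i$ then $\lambda^{*}_j\le\nu^{*}_j$ for all $j$), the assertion that $Y(\mu)$ fits in $Y(\eta)$ is equivalent to the assertion that $Y(\mu^{\bullet})$ fits in $Y(\eta^{\bullet})$. The whole theorem therefore reduces to the single claim that $\Gamma(G)\leq\Gamma(H)$ if and only if $\mu^{\bullet}_i\le\eta^{\bullet}_i$ for every $i$.

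For the forward implication I would argue directly with degrees. If $\Gamma(G)\leq\Gamma(H)$, realise $\Gamma(G)$ as the subgraph of $\Gamma(H)$ induced on a vertex set $S$; each $v\in S$ has degree in $\Gamma(H)$ at least its degree inside $\Gamma(H)[S]\cong\Gamma(G)$, since deleting vertices only destroys edges. Choosing the $i$ vertices of $S$ of largest $\Gamma(G)$-degree, each of them has $\Gamma(H)$-degree at least $\mu^{\bullet}_i$, so $\Gamma(H)$ possesses at least $i$ vertices of degree $\ge\mu^{\bullet}_i$, whence $\eta^{\bullet}_i\ge\mu^{\bullet}_i$. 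This yields $Y(\mu^{\bullet})\subseteq Y(\eta^{\bullet})$ and, after conjugation, that $Y(\mu)$ fits in $Y(\eta)$.

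The converse is where the real work lies, and it is the step I expect to be the main obstacle. Componentwise domination of degree partitions does \emph{not} in general force one threshold graph to embed in another as an induced subgraph, so the argument must exploit the rigid orbit structure peculiar to the graphs $\Gamma(\mathbb{Z}/p^{k}\mathbb{Z})$ rather than thresholdness alone. Assuming $\mu^{\bullet}_i\le\eta^{\bullet}_i$ for all $i$, I would reconstruct the embedding orbit by orbit: send the dominating orbit $\mathcal{O}_{a,p^{a}}$ of $\Gamma(G)$ to the dominating orbit of $\Gamma(H)$, and inject each orbit $\mathcal{O}_{a,p^{i}}$ into an orbit of $\Gamma(H)$ at the matching adjacency level, using the index monotonicity supplied by Lemma \ref{lm1} and the fact (Theorem \ref{thm2}) that the ambient graph is again a threshold graph. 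The containment $\mu^{\bullet}_i\le\eta^{\bullet}_i$ is exactly what guarantees enough vertices in each target orbit to receive the source orbit injectively. The delicate point to be verified is that this orbit matching preserves \emph{non}-adjacencies as well as adjacencies, so that the image is a genuine induced copy of $\Gamma(G)$ and not merely a homomorphic image; once that is established, Proposition \ref{prp2} together with the uniqueness in Theorem \ref{thm4} identifies the induced subgraph with $\Gamma(G)$ and gives $\Gamma(G)\leq\Gamma(H)$. Equivalently, one could recast the converse as the statement that the threshold creation sequence of $\Gamma(G)$ occurs as a subsequence of that of $\Gamma(H)$ precisely when the diagrams are nested, and I would likely carry it out by induction on the number of boxes of $Y(\eta)\setminus Y(\mu)$, checking that each cover relation in $\mathcal{Y}$ corresponds to a single vertex deletion.
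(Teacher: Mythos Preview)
Your proposal is correct, and in fact your \emph{second} suggested route for the converse---induction on $|Y(\eta)\setminus Y(\mu)|$, showing that each cover relation in $\mathcal{Y}$ corresponds to deleting a single vertex---is precisely the paper's argument. The paper removes a pendant vertex from $\Gamma(H)$, observes that this strips one block from $Y(\eta)$, iterates until the surviving Young diagram coincides with $Y(\mu)$, and then invokes Proposition~\ref{prp2} to identify the resulting induced subgraph with $\Gamma(G)$. No orbit-by-orbit matching via Lemma~\ref{lm1} is carried out, and the ``delicate point'' you flag about non-adjacencies never arises: since deletion of a vertex from a threshold graph yields another threshold graph, Proposition~\ref{prp2} does all the identification work at the end.

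Two remarks on the comparison. First, your forward direction is argued more carefully than the paper's, which simply asserts that vertex deletion makes $Y(\mu)$ fit in $Y(\eta)$; your degree-counting argument actually supplies the missing inequality $\mu^{\bullet}_i\le\eta^{\bullet}_i$. Second, your worry that componentwise domination of degree partitions need not force an induced embedding between threshold graphs is overly cautious in this setting: the one-box-at-a-time deletion shows directly that it does, so the orbit-based embedding you sketch first is more machinery than the theorem requires.
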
 

\begin{proof}
If $\Gamma(G)$ is obtained from $\Gamma(H)$ by deletion of one or more vertices, then the terms in the threshold eigenvalues partition $\mu$ are less in number than the terms in the threshold eigenvalues partition $\eta$ of $\Gamma(H)$. It follows that $Y(\mu)$ "fits in" $Y(\eta)$.

Conversely, suppose $Y(\mu)$ "fits in" $Y(\eta)$. The threshold eigenvalues partitions $\mu$ and $\eta$ are obtained from degree sequences of $\Gamma(G)$ and $\Gamma(H)$. If $\Gamma(G)$ and $\Gamma(H)$ have same degree sequence, then $\mu = \eta$. Therefore by Proposition \eqref{prp2}, $\Gamma(G) \cong \Gamma(H)$. Otherwise, $\mu \neq \eta$. Let $\Gamma(K)$ be a subgraph of $\Gamma(H)$ obtained by removing a pendant vertex from $\Gamma(H)$. Then $Y(\eta')$ is obtained from $Y(\eta)$ by removing a single block in the string with number of blocks in the string equal to the largest eigenvalue in $\eta$. It is clear that $Y(\eta'$ "fits in" $Y(\eta)$. We continue the process of deletion of vertices untill the resulting graph has the same threshold eigenvalues partition as $\Gamma(G)$. Thus, it follows that $\Gamma(H)$ contains a subgraph isomorphic to $\Gamma(H)$, that is, $\Gamma(G) \leq \Gamma(H)$.
\end{proof}

\begin{cor}\label{cr1}
The sets $\mathcal{T}_{p_1 \cdots p_n}$ and $\mathcal{Y}$ are isomorphic to each other (as posets).
\end{cor}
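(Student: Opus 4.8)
The plan is to exhibit the natural map $\Phi : \mathcal{T}_{p_1 \cdots p_n} \to \mathcal{Y}$ that sends each graph $\Gamma(G)$ to its threshold eigenvalues partition, and to verify that $\Phi$ is an order isomorphism. Nearly all of the technical content needed for this has already been secured in Theorems \ref{thm4} and \ref{thm5}, so the corollary is essentially an assembly of those two results together with Proposition \ref{prp2}; my task reduces to checking that $\Phi$ is a well-defined bijection and that both $\Phi$ and $\Phi^{-1}$ respect the orders.

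First I would confirm that $\Phi$ is well defined. Each $\Gamma(G) \in \mathcal{T}_{p_1 \cdots p_n}$ is realized by a finite abelian $p_r$-group of rank $1$, hence is a threshold graph by Theorem \ref{thm2}, and by Theorem \ref{thm3} its Laplacian spectrum is $\{0, 1, p_r, p_r^2, \ldots, p_r^k\}$; deleting the eigenvalue $0$ yields precisely a threshold eigenvalues partition in the sense of Definition \ref{df3}. Since $\mathcal{Y}$ is by definition the set of all such partitions arising from members of $\mathcal{T}_{p_1 \cdots p_n}$, the image $\Phi(\Gamma(G))$ lies in $\mathcal{Y}$ and surjectivity is immediate. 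For injectivity I would suppose $\Phi(\Gamma(G)) = \Phi(\Gamma(H)) = \pi$. Theorem \ref{thm4} guarantees that, up to isomorphism, there is exactly one finite abelian $p_r$-group of rank $1$ whose graph has $\pi$ as its nonzero Laplacian spectrum; equivalently, the common threshold eigenvalues partition pins down the common degree sequence, so Proposition \ref{prp2} forces $\Gamma(G) \cong \Gamma(H)$. Working with isomorphism classes in $\mathcal{T}_{p_1 \cdots p_n}$, this shows $\Phi$ is injective.

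It then remains to check that $\Phi$ is an order isomorphism, and here Theorem \ref{thm5} does all the work: for graphs $\Gamma(G), \Gamma(H) \in \mathcal{T}_{p_1 \cdots p_n}$ with threshold eigenvalues partitions $\mu = \Phi(\Gamma(G))$ and $\eta = \Phi(\Gamma(H))$, that theorem states $\Gamma(G) \leq \Gamma(H)$ if and only if $Y(\mu)$ "fits in" $Y(\eta)$. But $\mu \leq \eta$ in $\mathcal{Y}$ is defined to mean exactly that $Y(\mu)$ "fits in" $Y(\eta)$. Consequently $\Gamma(G) \leq \Gamma(H)$ in $\mathcal{T}_{p_1 \cdots p_n}$ if and only if $\Phi(\Gamma(G)) \leq \Phi(\Gamma(H))$ in $\mathcal{Y}$, giving the equivalence in both directions at once, so $\Phi$ and its inverse are simultaneously order-preserving.

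I expect the only real subtlety to be bookkeeping rather than a genuine obstacle. One must be careful that $\mathcal{T}_{p_1 \cdots p_n}$ is treated as a set of isomorphism classes of graphs, since otherwise $\Phi$ cannot be injective: distinct but isomorphic copies share a single partition. One should also verify that the "$+1$ on the diagonal" adjustment in Definition \ref{df3} does not disturb the mutual determination of the degree sequence and the threshold eigenvalues partition that underlies the injectivity step. Once these points are dispatched, combining well-definedness, bijectivity, and the two-way order-preservation inherited from Theorem \ref{thm5} yields that $\Phi$ is a poset isomorphism, establishing $\mathcal{T}_{p_1 \cdots p_n} \cong \mathcal{Y}$.
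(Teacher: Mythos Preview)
Your proposal is correct and follows exactly the same route as the paper: both exhibit the natural map $\Gamma(G) \mapsto Y(\mu)$ (equivalently $\Gamma(G) \mapsto \mu$) and assert it is a poset isomorphism, with the order-preservation inherited from Theorem~\ref{thm5} and bijectivity from Theorem~\ref{thm4} and Proposition~\ref{prp2}. The paper's own proof is a single sentence declaring this map to be a poset isomorphism, so your version is simply a more careful unpacking of the same argument.
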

\begin{proof}
The bijection $\Gamma(G) \longrightarrow Y(\mu)$ is a poset isomorphism from $\mathcal{T}_{p_1 \cdots p_n}$ onto $\mathcal{Y}$, where $\mu$ is threshold eigenvalues partition of the graph $\Gamma(G) \in \mathcal{T}_{p_1 \cdots p_n} $ realised by a finite abelian$p_r$-group of rank $1$. 
\end{proof}

For $n \geq 1$, let $\mathcal{F}_n$ be the the collection of all connected threshold graphs on $n$ vertices. We extend the partial order "$\leq$" to $\mathcal{F}_n$. Two graphs $G_1, G_2 \in\mathcal{F}_n$ are related as $G_1 \leq G_2$ if and only if $G_1$ is isomorphic to a subgraph of $G_2$. It is not difficult to verify that the poset $\mathcal{T}_{p_1 \cdots p_n}$ is an induced subposet of $\mathcal{F}_n$ and $\mathcal{F}_n$ is a self-dual distributive lattice. Moreover, if $\mathcal{H}_n$ is the collection of threshold eigenvalues partitions of members of $\mathcal{F}_n$, then again it is easy verify that $\mathcal{H}_n$ is a poset with respect to partial order "fits in" and  we have the following observation related to posets $\mathcal{F}_n$ and $\mathcal{H}_n$.

\begin{cor}
The bijection $G \longrightarrow Y(\mu)$ is a poset isomorphism from $\mathcal{F}_n$ to $\mathcal{H}_n$, where $\mu$ is threshold eigenvalues partition of $G \in \mathcal{F}_n$. In particular, $\mathcal{H}_n$ is self-dual distributive lattice.
\end{cor}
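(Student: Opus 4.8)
The plan is to establish the corollary by transporting the arguments of Theorem \ref{thm5} and Corollary \ref{cr1} from the subposet $\mathcal{T}_{p_1 \cdots p_n}$ to the ambient poset $\mathcal{F}_n$, and then to read off the lattice-theoretic conclusion formally from the resulting isomorphism together with the already-stated self-duality and distributivity of $\mathcal{F}_n$. First I would check that $G \mapsto Y(\mu)$ is well defined on all of $\mathcal{F}_n$: for an arbitrary connected threshold graph $G$ on $n$ vertices, the relationship between the degree sequence and the Laplacian spectrum of a threshold graph (see \cite{HK}) guarantees that the nonzero Laplacian eigenvalues of $G$ form the conjugate of its degree sequence, and hence constitute a threshold eigenvalues partition $\mu$ in the sense of Definition \ref{df3}. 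This assigns to each $G$ a well-defined $Y(\mu) \in \mathcal{H}_n$, and surjectivity is then immediate, since $\mathcal{H}_n$ is by definition the collection of threshold eigenvalues partitions (equivalently, their Young diagrams) of members of $\mathcal{F}_n$.

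For injectivity, suppose $G_1, G_2 \in \mathcal{F}_n$ satisfy $Y(\mu_1) = Y(\mu_2)$. Then $\mu_1 = \mu_2$, so $G_1$ and $G_2$ have the same nonzero Laplacian spectrum, hence the same conjugate degree sequence, and therefore (the conjugate being an involution on partitions) the same degree sequence; by Proposition \ref{prp2} two threshold graphs with equal degree sequences are isomorphic, so $G_1 \cong G_2$. For order preservation I would reuse the argument of Theorem \ref{thm5} essentially verbatim, the crucial observation being that that argument never invoked the group-theoretic origin of the graphs, only that they are threshold graphs. In the forward direction, if $G_1$ is isomorphic to a subgraph of $G_2$ then the deletion of the corresponding vertices removes blocks from the Young diagram, so $Y(\mu_1)$ \enquote{fits in} $Y(\mu_2)$; conversely, if $Y(\mu_1)$ \enquote{fits in} $Y(\mu_2)$, one deletes pendant vertices of $G_2$ one at a time, each deletion removing a single block from the string whose length equals the largest eigenvalue, until the resulting threshold graph has threshold eigenvalues partition $\mu_1$, whereupon Proposition \ref{prp2} identifies it with a copy of $G_1$ inside $G_2$. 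This shows $G_1 \leq G_2$ if and only if $Y(\mu_1)$ \enquote{fits in} $Y(\mu_2)$, so $G \mapsto Y(\mu)$ is a poset isomorphism $\mathcal{F}_n \to \mathcal{H}_n$.

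Finally, the \emph{in particular} statement is purely formal: self-duality and distributivity are invariants of poset isomorphism, so since $\mathcal{F}_n$ is a self-dual distributive lattice and $\mathcal{F}_n \cong \mathcal{H}_n$, the same holds for $\mathcal{H}_n$; concretely $\mathcal{H}_n^{*} \cong \mathcal{F}_n^{*} \cong \mathcal{F}_n \cong \mathcal{H}_n$. The main obstacle I anticipate is not a single deep step but the care needed to justify the passage from $\mathcal{T}_{p_1 \cdots p_n}$ to the strictly larger $\mathcal{F}_n$: one must verify that the assignment of a threshold eigenvalues partition, and the block-by-block bookkeeping linking subgraph containment to the \enquote{fits in} relation, depend only on the threshold property and on Proposition \ref{prp2}, and not on any special feature of graphs realized by $p_r$-groups. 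Once that independence is checked, the isomorphism and the lattice structure follow exactly as above.
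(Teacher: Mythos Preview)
Your proposal is correct and is exactly in the spirit of what the paper intends: the paper states this corollary without any proof, merely calling it an ``observation'' after remarking that $\mathcal{F}_n$ is a self-dual distributive lattice and that $\mathcal{H}_n$ is a poset under ``fits in''. Your argument --- transporting Theorem~\ref{thm5} and Corollary~\ref{cr1} from $\mathcal{T}_{p_1\cdots p_n}$ to $\mathcal{F}_n$ via Proposition~\ref{prp2} and the threshold degree/Laplacian correspondence, then pulling self-duality and distributivity across the isomorphism --- is precisely the intended (but unwritten) justification, and in fact supplies more detail than the paper does.
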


Now, we focus on sub-sequences (sub-partitions) of a threshold eigenvalues partition. We begin by dividing $Y(\pi)$ into two disjoint pieces of blocks, where $\pi)$ is a threshold eigenvalues partition of a graph $\Gamma(G)\in \mathcal{T}_{p_1 \cdots p_n}$. We denote by $R(Y(\pi))$ those blocks of $Y(\pi)$ which lie on the diagonal of a trace square of $Y(\pi)$ and to the right of diagonals. By the notation $C(Y(\pi))$, we denote those blocks of $Y(\pi)$ that lie strictly below diagonals of a trace square, that is, $R(Y(\pi))$ is a piece of blocks of $Y(\pi)$ on or above the diagonal and $C(Y(\pi))$ is the piece of $Y(\pi)$ which lie strictly below the diagonal. This process if division is illustrated as follows (Figure \eqref{5}).

\begin{figure}[H]
 \begin{center}
 \includegraphics[scale=.350]{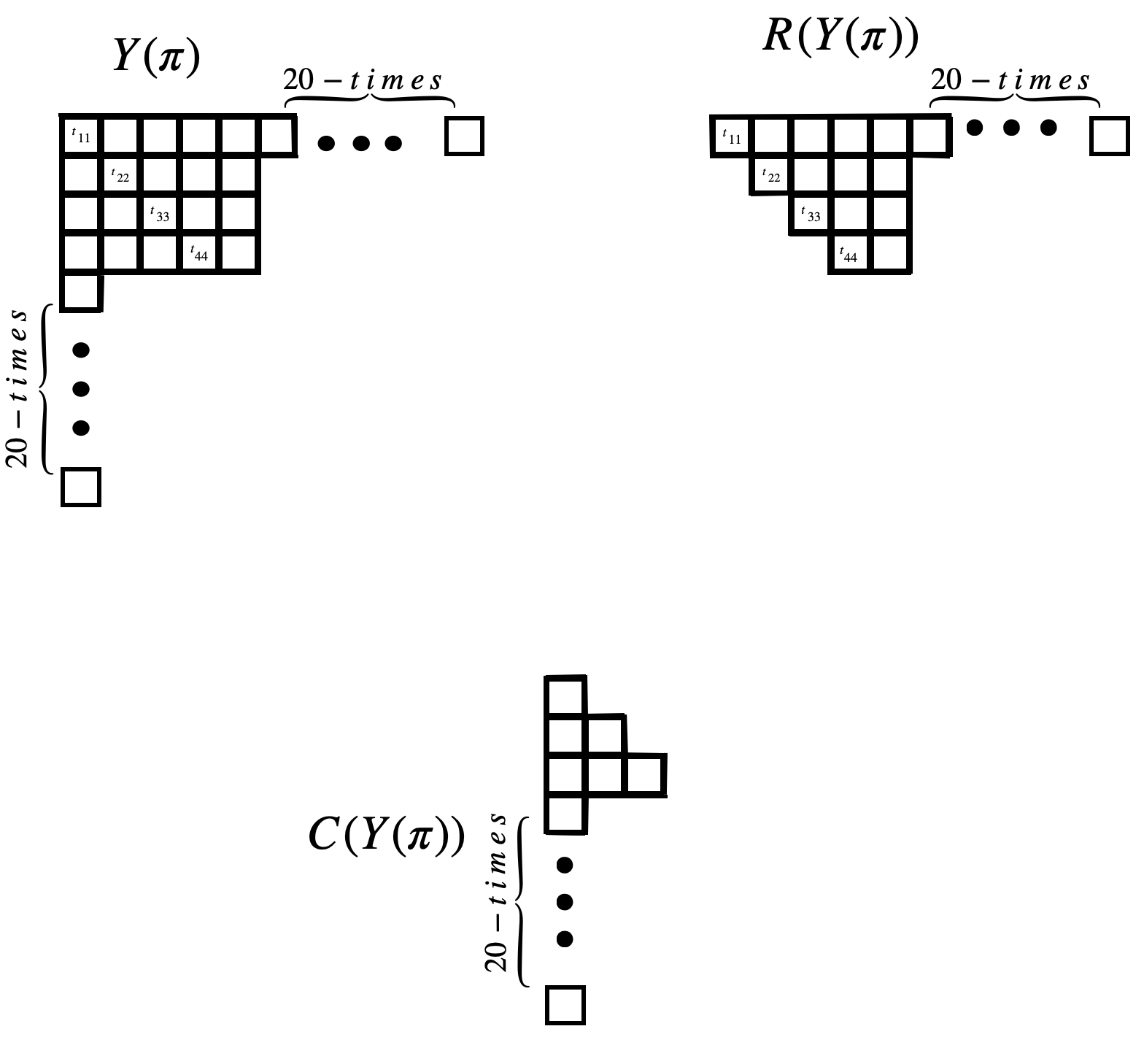}
 \end{center}
\caption{Division of $Y(\pi)$}
\label{5}
\end{figure}
If we look more closely at these shifted divisions of $Y(\pi)$. Each successive row of $R(Y(\pi))$ is shifted one block to the right. Furthermore, $R(Y(\pi))$ corresponding to sub-partition of $\pi$ forms a strictly decreasing sequence, that is, terms of of the sub-partition are distinct and these sub-partitions with distinct terms are called \textit{strict threshold eigen vales partitions}. Thus, if $\pi' = (a_1, a_2, \cdots, a_n)$ is a strict threshold eigen vales partition of a threshold eigenvalues partition $\pi$, then there is a unique shifted division whose $i^{th}$ row contains $a_i$ blocks, where $1 \leq i \leq n$. It follows that there is a one to one correspondence between the set of all threshold eigenvalue partitions of members of $\mathcal{T}_{p_1 \cdots p_n}$ and the set of all threshold eigen vales partition. As a result, $\mathcal{Y}$ is identical to the lattice, which we call \textit{lattice of shifted divisions}.

Recall that a subset $A$ of a poset $P$ is a \textit{chain} if any two elements of $A$ are comparable in $P$. A chain is called \textit{saturated} if there do not exist $x, z \in A$ and $y \in P\setminus{A}$ such that $y$ lies in between $x$ and $z$. In a locally finite lattice, a chain $\{x_0, x_1, \cdots, x_n\}$ of length $n$ is saturated if and only if $x_i$ covers $x_{i-1}$, where $1 \leq i \leq n$.

Since $\mathcal{T}_{p_1 \cdots p_n}$ is a locally finite distributive lattice, therefore $\mathcal{T}_{p_1 \cdots p_n}$ has a unique \textit{rank function} $\Psi : \mathcal{T}_{p_1 \cdots p_n} \longrightarrow \mathbb{Z}_{>0}$, where $\Psi\left(\Gamma(\mathbb{Z}/p_{r}^{\lambda_{r, 1}}\mathbb{Z}), \cdots, \Gamma(\mathbb{Z}/p_{r}^{\lambda_{r, n}}\mathbb{Z})\right)$ is the length of any saturated chain from $\hat{0}$ to the graph realized by a finite abelian $p_r$-group $\mathbb{Z}/p_{r}^{\lambda_{r,n}}\mathbb{Z}$. Note that a finite abelian $p_r$-group of rank $n$, $G_{\lambda_r, p_r} = \mathbb{Z}/p_{r}^{\lambda_{r, 1}}\mathbb{Z} \oplus \mathbb{Z}/p_{r}^{\lambda_{r, 2}}\mathbb{Z} \oplus \cdots \oplus \mathbb{Z}/p_{r}^{\lambda_{r,n}}\mathbb{Z}$ is identified with a sequence of abelian $p_r$ groups of rank $1$ $\left(\mathbb{Z}/p_{r}^{\lambda_{r, 1}}\mathbb{Z}, \mathbb{Z}/p_{r}^{\lambda_{r, 2}}\mathbb{Z}, \cdots, \mathbb{Z}/p_{r}^{\lambda_{r, n}}\mathbb{Z}\right)$ which in turn is identified with a sequence of graphs $\left(\Gamma(\mathbb{Z}/p_{r}^{\lambda_{r, 1}}\mathbb{Z}), \Gamma(\mathbb{Z}/p_{r}^{\lambda_{r, 2}}\mathbb{Z}), \cdots, \Gamma(\mathbb{Z}/p_{r}^{\lambda_{r, n}}\mathbb{Z})\right)$ or a sequence of a threshold partitions $(\mu_1, \mu_2, \cdots, \mu_n)\in \mathcal{Y}$. Therefore, the correspondence of $G_{\lambda_r, p_r} = \mathbb{Z}/p_{r}^{\lambda_{r, 1}}\mathbb{Z} \oplus \mathbb{Z}/p_{r}^{\lambda_{r, 2}}\mathbb{Z} \oplus \cdots \oplus \mathbb{Z}/p_{r}^{\lambda_{r,n}}\mathbb{Z}$ to $(\mu_1, \mu_2, \cdots, \mu_n)$ establishes that every finite abelain $p_r$-group of rank $n$ can be identified with a saturated chain in $\mathcal{T}_{p_1 \cdots p_n}$ or $\mathcal{Y}$ and the rank function of each abelian $p_r$-group of rank $n$ is $\Psi(\mu_1, \mu_2, \cdots, \mu_n) = {\lambda_{r,n}} =$ $max \{\lambda_{r,i} : 1\leq i \leq n \}$.
\begin{rem}
Let $\Lambda_{q}$ be the set of all non-isomorphic graphs of $\mathcal{T}_{p_1 \cdots p_n}$ with equal number of edges say $q$, (graphs realized by groups $\mathbb{Z}/2^{3}\mathbb{Z}$ and   $\mathbb{Z}/3^{2}\mathbb{Z}$ are non-isomorphic graphs with equal number of edges). Since there is one to one correspondence between threshold eigenvalues partitions and strict threshold eigenvalues partitions. The \textit{rank generating function} of the poset is presented in the following equation,

\begin{equation*}
\sum\limits_{q \geq 0}\kappa_{q}z^{q} = \prod\limits_{t \geq 1}(1 + z^{t}) = 1 + z + z^2 + 2z^3 + 2z^4 + \cdots,
\end{equation*}
where $\kappa_q$ is the cardinality of $\Lambda_{q}$.  
\end{rem}

The representation of a locally finite distributive lattice $\mathcal{T}_{235}$ is illustrated in Figure \eqref{6}. 
\begin{figure}[H]
 \begin{center}
 \includegraphics[scale=.415]{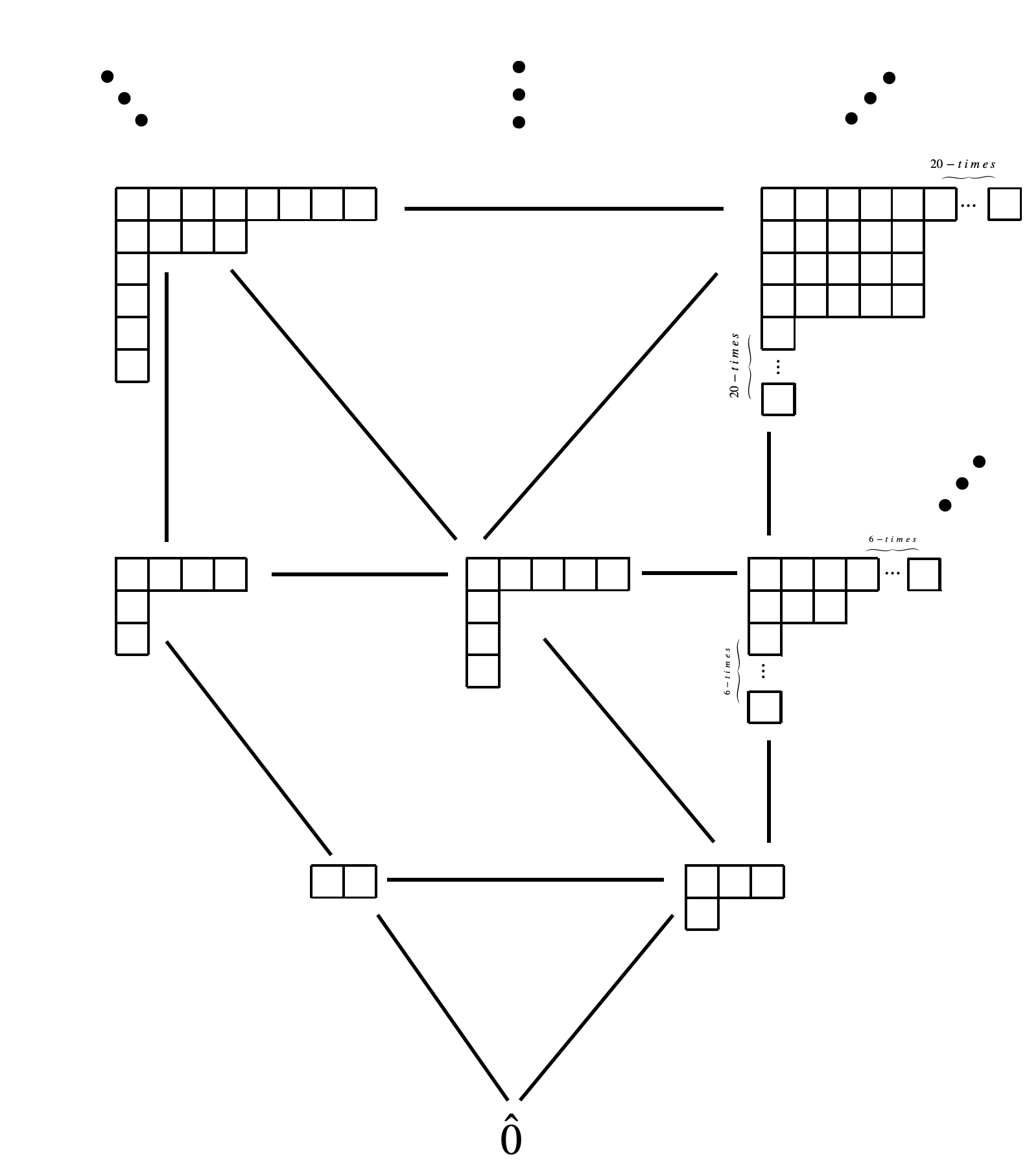}
 \end{center}
\caption{$\mathcal{T}_{235}$}
\label{6}
\end{figure}

Fix a finite abelian $p_r$-group $G$ and $\Gamma(G) \in \mathcal{T}_{p_1 \cdots p_n}$. Let $\ell(\Gamma(G))$ be the number of saturated chains in $\mathcal{T}_{p_1 \cdots p_n}$ from $\hat{0}$ to $\Gamma(G)$. 

The following result relates the number of saturated chains in $\mathcal{T}_{p_1 \cdots p_n}$ with the degree of a projective representation of a symmetric group $\mathcal{S}_t$ on $t$ number of symbols.

\begin{cor}
Let $\pi = (\pi_1, \pi_2, \cdots, \pi_k)$ be a strict threshold eigenvalues partition of some $\Gamma(G) \in \mathcal{T}_{p_1 \cdots p_n}$. Then the following hold,
\begin{equation}\label{eq3}
\ell(\Gamma(G)) = \frac{t!}{\prod\limits_{i =1}^{tr(Y(\pi))}\lambda_i!}\prod\limits_{r<s}\frac{\lambda_r - \lambda_s}{\lambda_r + \lambda_s},
\end{equation}
where $\lambda_i = \pi_{i} - i$, $1 \leq i \leq tr(Y(\pi))$ and $\lambda = (\lambda_1, \lambda_2, \cdots, \lambda_{tr(Y(\pi))})$ is a partition of some $t \in \Z_{>0}$.
\end{cor}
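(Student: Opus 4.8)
The plan is to reduce the count $\ell(\Gamma(G))$ to the enumeration of standard shifted Young tableaux and then to apply Schur's classical degree formula for the projective representations of $\mathcal{S}_t$. First I would use Corollary \ref{cr1} together with the identification of $\mathcal{Y}$ with the lattice of shifted divisions: under the poset isomorphism $\Gamma(G) \longmapsto Y(\mu)$, the graph $\Gamma(G)$ is sent to its threshold eigenvalues partition $\mu$, and the shifted division $R(Y(\mu))$ encodes the strict threshold eigenvalues partition $\pi = (\pi_1, \pi_2, \cdots, \pi_{tr(Y(\pi))})$. Setting $\lambda_i = \pi_i - i$ normalizes this into a genuine strict partition $\lambda = (\lambda_1, \lambda_2, \cdots, \lambda_{tr(Y(\pi))})$ of $t = \sum_i \lambda_i$, whose shifted diagram is exactly the shape carried by $\Gamma(G)$ inside the lattice of shifted divisions. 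In this lattice the bottom element is $\hat 0 = \emptyset$ and, as observed in the construction of shifted divisions, each cover relation adjoins a single block to the shifted diagram while keeping it the shifted diagram of a strict partition.

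Next I would set up the bijection between saturated chains and tableaux. A saturated chain from $\hat 0$ to $\Gamma(G)$ is, under the isomorphism above, a sequence of shifted diagrams $\emptyset = \lambda^{(0)} \subset \lambda^{(1)} \subset \cdots \subset \lambda^{(t)} = \lambda$ in which each $\lambda^{(j)}$ covers $\lambda^{(j-1)}$, i.e. is obtained from it by adding one block. Labelling the block added at the $j$-th step with the integer $j$ produces a filling of the shifted diagram of $\lambda$ by $1, 2, \cdots, t$ that increases along rows and down columns, that is, a standard shifted Young tableau of shape $\lambda$; conversely every such tableau records a unique saturated chain by reading off its cells in the order $1, 2, \cdots, t$. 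Hence $\ell(\Gamma(G))$ equals the number $g^{\lambda}$ of standard shifted Young tableaux of shape $\lambda$.

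Finally I would invoke the classical enumeration of standard shifted tableaux, namely Schur's formula
\begin{equation*}
g^{\lambda} = \frac{t!}{\prod_{i=1}^{tr(Y(\pi))}\lambda_i!}\prod_{r<s}\frac{\lambda_r - \lambda_s}{\lambda_r + \lambda_s},
\end{equation*}
which is precisely the degree (up to the usual power of $2$) of the projective representation of $\mathcal{S}_t$ associated with the strict partition $\lambda$. Substituting $\lambda_i = \pi_i - i$ and combining with $\ell(\Gamma(G)) = g^{\lambda}$ yields \eqref{eq3}.

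The main obstacle I expect is not the final substitution but the two structural claims feeding into it: that the lattice of shifted divisions really is the shifted Young lattice, so that its cover relations adjoin exactly one block (this is what guarantees that saturated chains correspond to standard shifted tableaux rather than to coarser growth sequences), and that the passage $\pi \mapsto \lambda$ with $\lambda_i = \pi_i - i$ produces a legitimately strict partition of the correct integer $t$ with the correct shifted shape. Care is also needed in stating what "degree of a projective representation" means here, since the tableaux count $g^{\lambda}$ differs from the full spin dimension $2^{\lfloor (t - tr(Y(\pi)))/2 \rfloor} g^{\lambda}$ by a power of two; the formula \eqref{eq3} is the tableaux count, and the correspondence with saturated chains must be phrased accordingly.
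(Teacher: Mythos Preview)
Your argument is correct and is, in fact, considerably more detailed than what the paper supplies. The paper's own proof consists of a single sentence asserting that the right-hand side of \eqref{eq3} counts the saturated chains from $\hat 0$ to $\Gamma(G)$; no mechanism is given for why this is so. Your route --- identifying $\mathcal{T}_{p_1\cdots p_n}$ with the shifted Young lattice via Corollary~\ref{cr1} and the lattice of shifted divisions, recognising saturated chains as standard shifted Young tableaux of shape $\lambda$, and then invoking Schur's enumeration $g^{\lambda} = \dfrac{t!}{\prod_i \lambda_i!}\prod_{r<s}\dfrac{\lambda_r-\lambda_s}{\lambda_r+\lambda_s}$ --- is exactly the standard justification one would expect behind the paper's bare assertion, and it makes explicit the projective-representation interpretation that the paper only alludes to in the surrounding text.

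The caveats you flag are the right ones. The identification of the lattice of shifted divisions with the shifted Young lattice (so that covers add a single cell) is used in the paper only implicitly, and the normalisation $\lambda_i = \pi_i - i$ deserves care: as stated in the corollary it presupposes that $\pi$ is the row-length sequence of the shifted piece $R(Y(\cdot))$, so that $\lambda$ is genuinely a strict partition of the correct $t$. Your remark about the power-of-two discrepancy between $g^{\lambda}$ and the full spin-character dimension is also apt; the formula \eqref{eq3} is precisely the shifted-tableau count, and the paper's phrase ``degree of a projective representation'' should be read in that light.
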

\begin{proof}
The right side of \eqref{eq3} represents the count of number of saturated chains from  $\hat{0}$ to $\Gamma(G)$.
\end{proof}

Note that the number of saturated chains  from $\hat{0}$ to $\Gamma(G)$ in \eqref{eq3} also provide a combinatorial formula for the number of finite abelian groups of different orders.

{\bf Acknowledgement:} This research project was initiated when the author visited School of Mathematics, TIFR Mumbai, India. So, I am immensely grateful to TIFR Mumbai for all the facilities. Moreover, I would like to thank Amitava Bhattacharya of TIFR Mumbai for some useful discussions on this research work.

\textbf{Declaration of competing interest.}\\
There is no conflict of interest to declare.

\textbf{Data Availability.}\\
Data sharing not applicable to this article as no datasets were generated or analysed during the current study.

\end{document}